\documentclass[11pt, reqno]{amsart}
\usepackage{geometry}
\usepackage{amsmath,mathtools}
\usepackage{bm}
\usepackage{fancyhdr}
\usepackage{amsthm,mathrsfs}
\usepackage{amsfonts}
\usepackage{amssymb}
\usepackage{amscd}
\usepackage{graphicx}
\usepackage{afterpage}
\usepackage{enumerate}
\usepackage{bm}
\usepackage{cite}
\usepackage{cases}
\usepackage{caption}
\usepackage[colorlinks=true, urlcolor=blue,  linkcolor=blue, citecolor=blue]{hyperref}
\usepackage{babel}
\usepackage{prettyref}
\usepackage{pgfplots} 
\pgfplotsset{compat=1.17}
\usepackage{booktabs}
\usepackage[linesnumbered,ruled,vlined]{algorithm2e}
\usepackage{algpseudocode}
\usepackage{float}
\usepgfplotslibrary{groupplots}
\makeatletter
\newtheorem{theorem}{Theorem}[section]

\newtheorem{corollary}[theorem]{Corollary}

\newtheorem{definition}[theorem]{Definition}
\newtheorem{example}[theorem]{Example}

\newtheorem{lemma}[theorem]{Lemma}

\newtheorem{proposition}[theorem]{Proposition}
\newtheorem{remark}[theorem]{Remark}

\newtheorem{assumption}[theorem]{Assumption}
\newcommand{\approxsim}{\mathrel{\vcenter{\offinterlineskip
\halign{\hfil##\hfil\cr
\raise0.2ex\hbox{$\sim$}\cr
\noalign{\kern-0.8ex}
\raise-0.2ex\hbox{$\sim$}\cr}}}}

\title[]{Sharp median testing and sparse criteria for generalized \(BMO\) spaces}

\author[S. Hashemi Sababe]{Saeed Hashemi Sababe$^*$}
\address[S. Hashemi Sababe]{R\&D Section, Data Premier Analytics, Canada}
\email{hashemi\_1365@yahoo.com}
\thanks{Corresponding author}

\subjclass[2020]{42B20, 42B25, 42B35, 46E30.}
\keywords{bounded mean oscillation; sparse domination; Banach function spaces; median oscillation;
local testing conditions; Orlicz spaces; variable exponent spaces.}

\begin{document}
\sloppy

\maketitle

\begin{abstract}
We study generalized \(BMO\)-type spaces associated with a normalized family of local 
quasi-Banach function spaces 
\(\mathbb X=\{X_Q\}_{Q\subset\mathbb R^n}\). For such a family we consider two 
oscillation seminorms: the mean-based seminorm \(BMO_{\mathbb X}\) and the 
best-constant seminorm \(BMO_{\mathbb X}^{*}\). The main purpose of the paper is to 
separate the two mechanisms that govern their comparison with classical \(BMO\). 

First, we introduce a lower median-testing functional \(\Lambda_{\mathbb X}\), which 
measures the nondegeneracy of the local norms on subsets occupying a fixed positive 
proportion of a cube. Using the John--Str\"omberg median oscillation characterization 
of \(BMO\), we prove that the condition 
\(\Lambda_{\mathbb X}(\lambda)>0\) for some \(0<\lambda<1/2\) implies the embedding
\[
    BMO_{\mathbb X}^{*}\cap L^1_{\mathrm{loc}}(\mathbb R^n)
    \hookrightarrow BMO .
\]
Second, we introduce a sparse testing seminorm \(T_{\mathbb X}\), which measures the 
compatibility of the local norms with sparse sums of characteristic functions. Using a sparse domination principle for \(BMO\) oscillation, we prove that \(T_X(\eta_0)<\infty\), where \(\eta_0\) is the sparsity parameter arising from the local sparse domination formula, implies
\[
    BMO\hookrightarrow BMO_{\mathbb X} .
\]
We also provide a sufficient small-set criterion for this sparse testing condition in 
terms of an upper testing functional \(\Psi_{\mathbb X}\).

The framework is applied to normalized \(L^p\) spaces, weighted \(L^1\) spaces, 
rearrangement invariant spaces, Orlicz spaces, variable exponent spaces, and mixed 
Orlicz--Lorentz type models. These examples show that lower median thickness and 
sparse summability are complementary phenomena: together they recover the usual 
\(BMO\) scale in many classical settings, while in borderline models they may behave 
differently.
\end{abstract}

\section{Introduction}\label{sec:introduction}

The space of functions of bounded mean oscillation, denoted by \(BMO(\mathbb R^n)\), 
is one of the fundamental endpoint spaces in real-variable harmonic analysis. It was 
introduced by John and Nirenberg in their classical work \cite{JohnNirenberg1961}, 
where they proved the exponential self-improvement phenomenon now known as the 
John--Nirenberg inequality. This result shows that the apparently \(L^1\)-based 
definition of \(BMO\) is equivalent to a whole scale of local \(L^p\)-oscillation 
conditions. Since then, \(BMO\) has become an indispensable substitute for \(L^\infty\) 
in endpoint estimates, interpolation theory, singular integral theory, commutator 
estimates, and weighted harmonic analysis; see, for example, 
\cite{Duoandikoetxea2001,Stein1993}.

A second important viewpoint on \(BMO\) is provided by median and rearrangement 
methods. Str\"omberg's formulation of bounded mean oscillation in terms of local 
oscillation and Orlicz norms \cite{JohnStromberg1979} gives a robust alternative to 
mean oscillation, especially in situations where local averages are inconvenient or 
where one works with distribution functions. This median-based perspective is closely 
related to the characterization of \(BMO\) due to Coifman and Rochberg 
\cite{CoifmanRochberg1980}, and it is particularly well suited to modern sparse 
domination arguments. In the present paper, this viewpoint plays a central role in 
the treatment of the best-constant oscillation space \(BMO_{\mathbb X}^{*}\).

Weighted inequalities provide another classical source of motivation. The 
Muckenhoupt classes \(A_p\), introduced in \cite{Muckenhoupt1972}, and the 
\(A_\infty\) theory developed in works such as 
\cite{CoifmanFefferman1974,Hruscev1984}, reveal deep connections between local 
measure-density properties and boundedness of harmonic-analysis operators. Weighted 
norm inequalities for fractional integrals were studied by Muckenhoupt and Wheeden 
\cite{MuckenhouptWheeden1974}, and these ideas have since become part of the standard 
background of the subject. In the context of generalized \(BMO\)-type spaces, the 
weighted model suggests that one should measure how the local norm treats 
characteristic functions of large and small subsets of a cube.

The development of general function spaces gives a broader setting for these questions. 
Rearrangement invariant spaces and interpolation spaces are systematically treated in 
\cite{BennettSharpley1988,Pick2013}. Orlicz spaces and their interpolation properties 
are discussed in \cite{Maligranda1989}, while generalized Orlicz spaces are treated in 
detail in \cite{HarjulehtoHasto2019}. Variable exponent Lebesgue and Sobolev spaces 
are developed in \cite{DieningHarjulehtoHastoRuzicka2011,CruzUribeFiorenza2013}. 
These theories show that local oscillation need not be measured only through powers 
of \(|f-\langle f\rangle_Q|\). Instead, one may use a family of normalized local 
function norms adapted to the geometry or growth behavior of the problem.

A modern and influential formulation of this idea appears in the work of Lerner, 
Lorist, and Ombrosi on \(BMO\) with respect to Banach function spaces 
\cite{LernerLoristOmbrosi2022BMOX}. Their framework considers a family of local 
Banach function spaces and studies when the associated generalized \(BMO\) spaces 
coincide with classical \(BMO\). This direction is closely connected to sparse 
domination. Sparse domination has become one of the main tools in harmonic analysis 
following the pointwise estimates of Lerner \cite{Lerner2010,Lerner2013,Lerner2014,Lerner2016} 
and related work such as \cite{CondeAlonsoRey2016}. More recently, operator-free 
sparse domination methods were developed in \cite{LernerLoristOmbrosi2022OperatorFree}, 
emphasizing that sparse structure is not merely a technical device but a fundamental 
organizing principle.

The purpose of the present paper is to study two related but distinct embeddings 
associated with a normalized family of local quasi-Banach function spaces
\[
    \mathbb X=\{X_Q\}_{Q\subset\mathbb R^n}.
\]
For such a family we define the mean-based oscillation seminorm
\[
    \|f\|_{BMO_{\mathbb X}}
    :=
    \sup_Q
    \|(f-\langle f\rangle_Q)\chi_Q\|_{X_Q},
\]
and the best-constant oscillation seminorm
\[
    \|f\|_{BMO_{\mathbb X}^{*}}
    :=
    \sup_Q
    \inf_{c\in\mathbb R}
    \|(f-c)\chi_Q\|_{X_Q}.
\]
In the classical normalized \(L^p\) setting, these seminorms are equivalent to the 
usual \(BMO\) seminorm. For general families \(\mathbb X\), however, the two notions 
need not behave in the same way. The distinction between averaging and best constants 
is one of the main structural issues addressed here.

The first embedding considered in this paper is
\[
    BMO_{\mathbb X}^{*}\cap L^1_{\mathrm{loc}}(\mathbb R^n)
    \hookrightarrow
    BMO.
\]
We show that this embedding follows from a lower median-testing condition on 
characteristic functions of large subsets of cubes. More precisely, we introduce the 
lower testing functional
\[
    \Lambda_{\mathbb X}(t)
    :=
    \inf_Q
    \inf_{\substack{E\subset Q\\ |E|\geq t|Q|}}
    \frac{\|\chi_E\|_{X_Q}}{\|\chi_Q\|_{X_Q}},
    \qquad 0<t<1.
\]
The condition \(\Lambda_{\mathbb X}(\lambda)>0\), for some \(0<\lambda<1/2\), prevents 
the local norms from degenerating on subsets of fixed positive relative measure. Using 
the John--Str\"omberg median oscillation characterization of \(BMO\), we prove that 
this condition implies
\[
    BMO_{\mathbb X}^{*}\cap L^1_{\mathrm{loc}}(\mathbb R^n)
    \hookrightarrow
    BMO.
\]
This result should be understood as a robust sufficient condition rather than as a 
universal characterization in full generality.

The second embedding considered is
\[
    BMO\hookrightarrow BMO_{\mathbb X}.
\]
This direction is governed by sparse geometry rather than by lower thickness. We 
therefore introduce the sparse testing seminorm
\[
    T_{\mathbb X}(\eta)
    :=
    \sup_Q
    \sup_{\substack{\mathcal S\subset\mathcal D(Q)\\
    \mathcal S\ \eta\text{-sparse}}}
    \left\|
        \sum_{P\in\mathcal S}\chi_P
    \right\|_{X_Q},
    \qquad 0<\eta<1.
\]
Using a standard sparse domination principle for \(BMO\) oscillation, we prove that 
\(T_{\mathbb X}(\eta)<\infty\) implies
\[
    BMO\hookrightarrow BMO_{\mathbb X}.
\]
We also give a convenient sufficient condition for finite sparse testing in terms of 
the upper testing functional
\[
    \Psi_{\mathbb X}(t)
    :=
    \sup_Q
    \sup_{\substack{E\subset Q\\ |E|\leq t|Q|}}
    \frac{\|\chi_E\|_{X_Q}}{\|\chi_Q\|_{X_Q}},
    \qquad 0<t<1.
\]
In particular, an integral condition of the form
\[
    \int_0^1 \frac{\Psi_{\mathbb X}(t)^\rho}{t}\,dt<\infty
\]
is sufficient for sparse testing whenever the local quasi-norms satisfy a uniform 
\(\rho\)-subadditivity condition.

The two testing mechanisms are complementary. The lower functional 
\(\Lambda_{\mathbb X}\) controls the passage from \(BMO_{\mathbb X}^{*}\) to classical 
\(BMO\), while the sparse testing seminorm \(T_{\mathbb X}\), or the upper functional 
\(\Psi_{\mathbb X}\), controls the passage from classical \(BMO\) to 
\(BMO_{\mathbb X}\). When both mechanisms are available, one obtains the chain
\[
    BMO_{\mathbb X}^{*}\cap L^1_{\mathrm{loc}}(\mathbb R^n)
    \hookrightarrow
    BMO
    \hookrightarrow
    BMO_{\mathbb X}.
\]
This separation clarifies why lower-density assumptions and sparse summability 
assumptions should not be conflated.

The examples developed in this paper include normalized \(L^p\) spaces, weighted 
\(L^1\) spaces, rearrangement invariant spaces, Orlicz spaces, variable exponent 
spaces, and mixed Orlicz--Lorentz type models. In the \(L^p\) case, both testing 
functionals reduce to powers of \(t\), and one recovers the classical equivalence with 
\(BMO\). In weighted \(L^1\) models, the testing quantities become weighted density 
conditions, linking the present framework to the \(A_\infty\) theory of 
\cite{CoifmanFefferman1974,Hruscev1984,Muckenhoupt1972,MuckenhouptWheeden1974}. 
For rearrangement invariant and Orlicz spaces, the conditions are expressed in terms 
of fundamental functions or inverse Young functions, in agreement with the general 
function-space theory of 
\cite{BennettSharpley1988,HarjulehtoHasto2019,Maligranda1989,Pick2013}. For variable 
exponent spaces, the local Luxemburg norm estimates are controlled by the essential 
bounds of the exponent, consistent with the foundations in 
\cite{CruzUribeFiorenza2013,DieningHarjulehtoHastoRuzicka2011}.

The paper is organized as follows. In Section~\ref{sec:preliminaries} we introduce 
the notation, sparse families, local quasi-Banach function spaces, generalized 
\(BMO\)-seminorms, rearrangements, medians, and the testing functionals 
\(\Lambda_{\mathbb X}\), \(\Psi_{\mathbb X}\), and \(T_{\mathbb X}\). In 
Section~\ref{sec:median-testing} we prove the median-testing criterion for the 
embedding \(BMO_{\mathbb X}^{*}\cap L^1_{\mathrm{loc}}(\mathbb R^n)\hookrightarrow BMO\). 
In Section~\ref{sec:sparse-testing} we prove the sparse-testing criterion for the 
embedding \(BMO\hookrightarrow BMO_{\mathbb X}\) and derive a sufficient small-set 
condition in terms of \(\Psi_{\mathbb X}\). In Section~\ref{sec:examples} we apply 
the abstract criteria to the principal model classes mentioned above. The final 
section contains concluding remarks and possible directions for further work.

\section{Preliminaries}\label{sec:preliminaries}

Throughout the paper, we work on \(\mathbb R^n\) equipped with Lebesgue measure. 
All cubes are assumed to have sides parallel to the coordinate axes. If \(Q\) is a 
cube, then \(|Q|\) denotes its Lebesgue measure, and if \(E\subset \mathbb R^n\) is 
measurable, then \(\chi_E\) denotes its characteristic function. For 
\(f\in L^1_{\mathrm{loc}}(\mathbb R^n)\), we write
\begin{equation}\label{eq:average}
    \langle f\rangle_Q
    :=
    \frac{1}{|Q|}
    \int_Q f(x)\,dx .
\end{equation}
We use the notation \(A\lesssim B\) if there exists a constant \(C>0\), independent 
of the relevant parameters, such that \(A\leq CB\). We write \(A\simeq B\) if both 
\(A\lesssim B\) and \(B\lesssim A\) hold.

Given a cube \(Q\), let \(\mathcal D(Q)\) denote the collection of dyadic subcubes of 
\(Q\), obtained by repeatedly subdividing \(Q\) into \(2^n\) congruent subcubes.

If \(\mathcal S\subset \mathcal D(Q)\) and \(P\in\mathcal S\), we denote by
\[
    \operatorname{ch}_{\mathcal S}(P)
\]
the collection of maximal cubes \(R\in\mathcal S\) such that \(R\subsetneq P\). These 
are called the \(\mathcal S\)-children of \(P\).

\begin{definition}\label{def:sparse-family}
Let \(0<\eta<1\). A subcollection \(\mathcal S\subset\mathcal D(Q)\) is called 
\(\eta\)-sparse if for every \(P\in\mathcal S\),
\begin{equation}\label{eq:sparse-children-condition}
    \sum_{R\in\operatorname{ch}_{\mathcal S}(P)} |R|
    \leq
    (1-\eta)|P|.
\end{equation}
\end{definition}

\begin{remark}\label{rem:sparse-definition}
Definition~\ref{def:sparse-family} is the stopping-tree form of sparsity. It is 
slightly stronger than merely requiring the existence of pairwise disjoint major 
subsets \(E_P\subset P\). The advantage of 
\eqref{eq:sparse-children-condition} is that it gives the geometric generation decay 
needed in Proposition~\ref{prop:Psi-implies-TX}. Moreover, 
\eqref{eq:sparse-children-condition} implies the usual sparse-set formulation by 
taking
\begin{equation}\label{eq:sparse-major-set}
    E_P
    :=
    P\setminus
    \bigcup_{R\in\operatorname{ch}_{\mathcal S}(P)}R .
\end{equation}
Then \(|E_P|\geq \eta |P|\), and the sets \(\{E_P\}_{P\in\mathcal S}\) are pairwise 
disjoint.
\end{remark}

The following elementary consequence of Definition~\ref{def:sparse-family} will be 
used in Section~\ref{sec:sparse-testing}.

\begin{lemma}\label{lem:sparse-generations}
Let \(\mathcal S\subset\mathcal D(Q)\) be an \(\eta\)-sparse family in the sense of 
Definition~\ref{def:sparse-family}. Define the generations 
\(\{\mathcal S_k\}_{k\geq0}\) recursively as follows. The family \(\mathcal S_0\) 
consists of the maximal cubes of \(\mathcal S\). Once 
\(\mathcal S_0,\ldots,\mathcal S_k\) have been defined, \(\mathcal S_{k+1}\) consists 
of the \(\mathcal S\)-children of the cubes in \(\mathcal S_k\). Set
\begin{equation}\label{eq:omega-k-definition}
    \Omega_k
    :=
    \bigcup_{P\in\mathcal S_k}P .
\end{equation}
Then the cubes in each generation \(\mathcal S_k\) are pairwise disjoint, and
\begin{equation}\label{eq:sparse-generation-decay}
    |\Omega_k|
    \leq
    (1-\eta)^k |Q|,
    \qquad k\geq0 .
\end{equation}
\end{lemma}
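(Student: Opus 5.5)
The plan is to argue by induction on \(k\), using only the nested structure of dyadic cubes and the stopping-tree condition \eqref{eq:sparse-children-condition}. The basic fact is that any two cubes in \(\mathcal D(Q)\) are either disjoint or nested. Maximal cubes therefore make sense: every strictly increasing chain in \(\mathcal D(Q)\) is finite, since it is bounded above by \(Q\). I will first record that \(\mathcal S_0\) consists of pairwise disjoint cubes. Two distinct maximal cubes cannot be nested, so by the dichotomy they are disjoint. Since \(\Omega_0\subset Q\), this also gives \(|\Omega_0|\le |Q|\), which is the case \(k=0\) of \eqref{eq:sparse-generation-decay}.

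\textbf{Disjointness of generations.} Suppose the cubes of \(\mathcal S_k\) are pairwise disjoint. Take two distinct cubes \(R,R'\in\mathcal S_{k+1}\), with \(R\in\operatorname{ch}_{\mathcal S}(P)\) and \(R'\in\operatorname{ch}_{\mathcal S}(P')\), where \(P,P'\in\mathcal S_k\).

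If \(P\neq P'\), then \(R\subset P\) and \(R'\subset P'\) with \(P\cap P'=\emptyset\), so \(R\cap R'=\emptyset\).

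If \(P=P'\), then \(R\) and \(R'\) are both maximal among the cubes of \(\mathcal S\) strictly contained in \(P\). Neither can contain the other without contradicting maximality, so the dyadic dichotomy forces them to be disjoint.

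One must also check that a cube cannot be an \(\mathcal S\)-child of two distinct parents in \(\mathcal S_k\). This is automatic, because the parents are disjoint.

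\textbf{Measure decay.} For the estimate I will use the disjointness just proved together with \eqref{eq:sparse-children-condition}:
\[
|\Omega_{k+1}|=\sum_{P\in\mathcal S_k}\sum_{R\in\operatorname{ch}_{\mathcal S}(P)}|R|\le (1-\eta)\sum_{P\in\mathcal S_k}|P|=(1-\eta)|\Omega_k|.
\]
The equality on the left holds because \(\mathcal S_{k+1}\) is a disjoint union over the parents of pairwise disjoint cubes. Iterating from \(|\Omega_0|\le |Q|\) then yields \eqref{eq:sparse-generation-decay}.

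\textbf{Main obstacle.} The only delicate point is the well-definedness of the maximal cubes and children, and the exact additivity used above. I will handle this with the dyadic nesting dichotomy and the finiteness of ascending chains in \(\mathcal D(Q)\). I will not need that the generations exhaust \(\mathcal S\), since the statement only concerns the \(\mathcal S_k\) as defined.
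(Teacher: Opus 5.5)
Your proposal is correct and follows essentially the same route as the paper: induction on $k$, disjointness of each generation from maximality, and summing the stopping condition \eqref{eq:sparse-children-condition} over the pairwise disjoint cubes of $\mathcal S_k$ to get $|\Omega_{k+1}|\le(1-\eta)|\Omega_k|$, then iterating from $|\Omega_0|\le|Q|$. You simply make explicit some details the paper leaves implicit: the dyadic nesting dichotomy, the existence of maximal cubes, and the uniqueness of the parent of each cube in $\mathcal S_{k+1}$.
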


\begin{proof}
The cubes in \(\mathcal S_0\) are pairwise disjoint by maximality, and 
\(\Omega_0\subset Q\). Suppose that the cubes in \(\mathcal S_k\) are pairwise 
disjoint. Since \(\mathcal S_{k+1}\) consists of the \(\mathcal S\)-children of cubes 
in \(\mathcal S_k\), the cubes in \(\mathcal S_{k+1}\) are also pairwise disjoint. 
Moreover, by \eqref{eq:sparse-children-condition},
\[
    \sum_{\substack{R\in\mathcal S_{k+1}\\ R\subset P}} |R|
    \leq
    (1-\eta)|P|
\]
for each \(P\in\mathcal S_k\). Summing over \(P\in\mathcal S_k\), we obtain
\[
    |\Omega_{k+1}|
    \leq
    (1-\eta)|\Omega_k|.
\]
Since \(|\Omega_0|\leq |Q|\), iteration gives 
\eqref{eq:sparse-generation-decay}.
\end{proof}

Let \((\Omega,\mu)\) be a measure space. A quasi-normed lattice 
\(X\subset L^0(\Omega)\) is called a quasi-Banach function space if the following 
conditions hold.

\begin{enumerate}
    \item If \(f,g\in L^0(\Omega)\), \(|f|\leq |g|\) a.e., and \(g\in X\), then 
    \(f\in X\) and
    \begin{equation}\label{eq:lattice-property}
        \|f\|_X\leq \|g\|_X .
    \end{equation}

    \item There exists a constant \(C_X\geq1\) such that
    \begin{equation}\label{eq:quasi-triangle}
        \|f+g\|_X
        \leq
        C_X\bigl(\|f\|_X+\|g\|_X\bigr),
        \qquad f,g\in X .
    \end{equation}

    \item The Fatou property holds: if \(0\leq f_j\uparrow f\) a.e. and 
    \(\sup_j\|f_j\|_X<\infty\), then \(f\in X\) and
    \begin{equation}\label{eq:fatou-property}
        \|f\|_X
        =
        \sup_j\|f_j\|_X .
    \end{equation}
\end{enumerate}

If the quasi-norm is a norm, then \(X\) is called a Banach function space.

In the quasi-Banach part of the paper, we shall also use the following uniform 
subadditivity hypothesis.

\begin{assumption}\label{ass:rho-subadditivity}
There exist \(0<\rho\leq1\) and \(C_\rho\geq1\), independent of the cube \(Q\), such 
that for every finite collection \(\{h_j\}_{j=1}^{N}\subset X_Q\),
\begin{equation}\label{eq:rho-triangle}
    \left\|
        \sum_{j=1}^{N}h_j
    \right\|_{X_Q}^{\rho}
    \leq
    C_\rho
    \sum_{j=1}^{N}
    \|h_j\|_{X_Q}^{\rho}.
\end{equation}
\end{assumption}

\begin{remark}\label{rem:rho-subadditivity}
If each \(X_Q\) is a Banach function space, then 
Assumption~\ref{ass:rho-subadditivity} holds with \(\rho=1\). For quasi-Banach 
function spaces, \eqref{eq:rho-triangle} is the standard \(\rho\)-subadditive form 
available after passing to an equivalent quasi-norm.
\end{remark}

We now introduce the local family of spaces used throughout the paper:
\begin{equation}\label{eq:local-family}
    \mathbb X
    :=
    \{X_Q\}_{Q\subset\mathbb R^n},
\end{equation}
where \(Q\) ranges over all cubes in \(\mathbb R^n\), and each \(X_Q\) is a 
quasi-Banach function space over the measure space \((Q,dx)\). We assume that the 
family is normalized as follows.

\begin{assumption}\label{ass:normalization}
There exist constants \(0<c_0\leq C_0<\infty\) such that
\begin{equation}\label{eq:normalization}
    c_0
    \leq
    \|\chi_Q\|_{X_Q}
    \leq
    C_0
\end{equation}
for every cube \(Q\subset\mathbb R^n\).
\end{assumption}

Whenever convenient, we regard a function \(g\) on \(Q\) as the restriction to \(Q\) of 
a measurable function on \(\mathbb R^n\). Thus expressions such as 
\(\|g\chi_Q\|_{X_Q}\) always mean the \(X_Q\)-quasi-norm of the restriction of \(g\) to 
\(Q\).

Let \(\mathbb X=\{X_Q\}_{Q\subset\mathbb R^n}\) satisfy 
Assumption~\ref{ass:normalization}. For \(f\in L^1_{\mathrm{loc}}(\mathbb R^n)\), 
define
\begin{equation}\label{eq:BMOX-norm}
    \|f\|_{BMO_{\mathbb X}}
    :=
    \sup_Q
    \|(f-\langle f\rangle_Q)\chi_Q\|_{X_Q}.
\end{equation}
The space \(BMO_{\mathbb X}\) consists of all locally integrable functions for which 
\eqref{eq:BMOX-norm} is finite.

We also define the best-constant version by
\begin{equation}\label{eq:BMOX-star-norm}
    \|f\|_{BMO_{\mathbb X}^{*}}
    :=
    \sup_Q
    \inf_{c\in\mathbb R}
    \|(f-c)\chi_Q\|_{X_Q},
\end{equation}
whenever the right-hand side is meaningful. In particular, 
\eqref{eq:BMOX-star-norm} is naturally defined for measurable functions 
\(f\in L^0_{\mathrm{loc}}(\mathbb R^n)\) provided that, for every cube \(Q\), there 
exists \(c\in\mathbb R\) such that \((f-c)\chi_Q\in X_Q\).

The distinction between \eqref{eq:BMOX-norm} and \eqref{eq:BMOX-star-norm} is 
important. The seminorm in \eqref{eq:BMOX-norm} uses the average 
\eqref{eq:average} and therefore requires local integrability. By contrast, 
\eqref{eq:BMOX-star-norm} is formulated in terms of best constants and can be 
meaningful even when local averages are not available.

For reference, the classical \(BMO\) seminorm is
\begin{equation}\label{eq:classical-bmo}
    \|f\|_{BMO}
    :=
    \sup_Q
    \frac{1}{|Q|}
    \int_Q |f(x)-\langle f\rangle_Q|\,dx .
\end{equation}
We write
\begin{equation}\label{eq:embedding-notation}
    A\hookrightarrow B
\end{equation}
to mean that \(A\) is continuously embedded into \(B\).

For a measurable function \(f\) on a cube \(Q\), its local distribution function is
\begin{equation}\label{eq:distribution-function}
    d_{f,Q}(\alpha)
    :=
    |\{x\in Q: |f(x)|>\alpha\}|,
    \qquad \alpha>0.
\end{equation}
The non-increasing rearrangement of \(f\) on \(Q\) is defined by
\begin{equation}\label{eq:local-rearrangement}
    f_Q^*(t)
    :=
    \inf\{\alpha>0: d_{f,Q}(\alpha)\leq t\},
    \qquad 0<t<|Q|.
\end{equation}

\begin{definition}\label{def:median}
Let \(f\in L^0(Q)\). A number \(m_f(Q)\in\mathbb R\) is called a median of \(f\) over 
\(Q\) if
\begin{equation}\label{eq:median-upper-tail}
    |\{x\in Q: f(x)>m_f(Q)\}|
    \leq
    \frac{|Q|}{2}
\end{equation}
and
\begin{equation}\label{eq:median-lower-tail}
    |\{x\in Q: f(x)<m_f(Q)\}|
    \leq
    \frac{|Q|}{2}.
\end{equation}
\end{definition}

For \(0<\lambda<1\), define the local oscillation functional
\begin{equation}\label{eq:local-oscillation}
    \omega_\lambda(f;Q)
    :=
    \inf_{c\in\mathbb R}
    \bigl((f-c)\chi_Q\bigr)_Q^*(\lambda |Q|).
\end{equation}
This quantity measures the smallest oscillation level outside a set of relative measure 
at most \(\lambda\). It is invariant under adding constants to \(f\).

\begin{remark}\label{rem:median-vs-oscillation}
The notation in \eqref{eq:local-oscillation} is intentionally separated from the 
median notation in Definition~\ref{def:median}. Thus \(m_f(Q)\) denotes a median, 
whereas \(\omega_\lambda(f;Q)\) denotes a local oscillation threshold.
\end{remark}

We now introduce the two local testing functionals used in the sequel. The first one 
measures lower thickness of the family \(\mathbb X\) on large subsets of cubes.

\begin{definition}\label{def:lower-testing-functional}
For \(0<t<1\), define
\begin{equation}\label{eq:Lambda-definition}
    \Lambda_{\mathbb X}(t)
    :=
    \inf_Q
    \inf_{\substack{E\subset Q\\ |E|\geq t|Q|}}
    \frac{\|\chi_E\|_{X_Q}}{\|\chi_Q\|_{X_Q}} .
\end{equation}
We say that \(\mathbb X\) satisfies the median testing condition at level \(t\) if
\begin{equation}\label{eq:median-testing-condition}
    \Lambda_{\mathbb X}(t)>0.
\end{equation}
\end{definition}

The use of the infimum in \eqref{eq:Lambda-definition} is essential: 
\(\Lambda_{\mathbb X}(t)\) prevents the local norms from degenerating on sets occupying 
a fixed positive proportion of a cube.

The second testing functional measures the size of small sets.

\begin{definition}\label{def:upper-testing-functional}
For \(0<t<1\), define
\begin{equation}\label{eq:Psi-definition}
    \Psi_{\mathbb X}(t)
    :=
    \sup_Q
    \sup_{\substack{E\subset Q\\ |E|\leq t|Q|}}
    \frac{\|\chi_E\|_{X_Q}}{\|\chi_Q\|_{X_Q}} .
\end{equation}
\end{definition}

The normalization in Assumption~\ref{ass:normalization} implies that 
\eqref{eq:Lambda-definition} and \eqref{eq:Psi-definition} are equivalent, up to 
harmless constants, to the corresponding unnormalized quantities. We keep the ratios 
in the definitions to make scale invariance explicit.

The functional \(\Lambda_{\mathbb X}\) is used in the study of embeddings of the form
\begin{equation}\label{eq:BMO-star-to-BMO-embedding}
    BMO_{\mathbb X}^{*}\hookrightarrow BMO,
\end{equation}
where one needs lower control on large subsets. The functional \(\Psi_{\mathbb X}\) is 
used in the study of embeddings of the form
\begin{equation}\label{eq:BMO-to-BMOX-embedding}
    BMO\hookrightarrow BMO_{\mathbb X},
\end{equation}
where one needs upper control on the local norms of small exceptional sets.

For \(0<\eta<1\), define the sparse testing seminorm of the family \(\mathbb X\) by
\begin{equation}\label{eq:TX-definition}
    T_{\mathbb X}(\eta)
    :=
    \sup_Q
    \sup_{\substack{\mathcal S\subset\mathcal D(Q)\\
    \mathcal S\ \eta\text{-sparse in the sense of Definition~\ref{def:sparse-family}}}}
    \left\|
        \sum_{P\in\mathcal S}\chi_P
    \right\|_{X_Q}.
\end{equation}
This quantity measures the compatibility of the family \(\mathbb X\) with sparse 
geometry. It will be used later to formulate sufficient conditions for the embedding 
\eqref{eq:BMO-to-BMOX-embedding}.

At this preliminary stage we do not assume any sparse domination theorem. Precise 
sparse domination results, when needed, will be stated separately in 
Section~\ref{sec:sparse-testing}. This avoids mixing definitions with unproved 
pointwise domination principles.

We record several standard models covered by the notation above.

\begin{example}\label{ex:normalized-Lp}
Let \(1\leq p<\infty\), and set
\begin{equation}\label{eq:normalized-Lp}
    \|g\|_{X_Q}
    :=
    \left(
        \frac{1}{|Q|}
        \int_Q |g(x)|^p\,dx
    \right)^{1/p}.
\end{equation}
Then \(\|\chi_Q\|_{X_Q}=1\), so Assumption~\ref{ass:normalization} holds. In this case,
\begin{equation}\label{eq:Lambda-Lp}
    \Lambda_{\mathbb X}(t)=t^{1/p},
    \qquad 0<t<1,
\end{equation}
and
\begin{equation}\label{eq:Psi-Lp}
    \Psi_{\mathbb X}(t)=t^{1/p},
    \qquad 0<t<1.
\end{equation}
\end{example}

\begin{example}\label{ex:weighted-L1}
Let \(w\) be a locally integrable weight with \(0<w(Q)<\infty\) for every cube \(Q\). 
Define
\begin{equation}\label{eq:weighted-L1-normalized}
    \|g\|_{X_Q}
    :=
    \frac{1}{w(Q)}
    \int_Q |g(x)|w(x)\,dx .
\end{equation}
Then \(\|\chi_Q\|_{X_Q}=1\). For measurable \(E\subset Q\),
\begin{equation}\label{eq:weighted-characteristic}
    \|\chi_E\|_{X_Q}
    =
    \frac{w(E)}{w(Q)}.
\end{equation}
Thus the lower testing condition \eqref{eq:median-testing-condition} becomes the 
requirement that large Lebesgue subsets of \(Q\) have uniformly positive \(w\)-measure 
relative to \(w(Q)\).
\end{example}

\begin{example}\label{ex:localized-global-space}
Let \(X\) be a Banach function space on \(\mathbb R^n\) such that 
\(0<\|\chi_Q\|_X<\infty\) for every cube \(Q\). Define
\begin{equation}\label{eq:localized-global-space}
    \|g\|_{X_Q}
    :=
    \frac{\|g\chi_Q\|_X}{\|\chi_Q\|_X}.
\end{equation}
Then Assumption~\ref{ass:normalization} holds automatically. This construction 
includes many rearrangement invariant spaces and Orlicz-type spaces after local 
normalization.
\end{example}

\section{A median testing criterion for the embedding 
\(BMO_{\mathbb X}^{*}\hookrightarrow BMO\)}
\label{sec:median-testing}

In this section we prove a sufficient median-testing criterion for recovering the 
classical \(BMO\) seminorm from the best-constant seminorm associated with the 
family \(\mathbb X=\{X_Q\}_{Q\subset\mathbb R^n}\). The guiding point is that a 
uniform lower bound for the \(X_Q\)-norms of characteristic functions of large subsets 
controls fixed local oscillation quantiles. The passage from such quantile control to 
classical \(BMO\) is then supplied by the John--Str\"omberg characterization.

Throughout this section, \(\mathbb X\) is assumed to satisfy 
Assumption~\ref{ass:normalization}. We also use the local oscillation functional 
\(\omega_\lambda(f;Q)\) introduced in \eqref{eq:local-oscillation}.

For \(0<\lambda<1/2\), define
\begin{equation}\label{eq:sharp-median-bmo}
    \|f\|_{BMO_\lambda^\#}
    :=
    \sup_Q \omega_\lambda(f;Q),
\end{equation}
where the supremum is taken over all cubes \(Q\subset\mathbb R^n\).

\begin{theorem}[John--Str\"omberg]\label{thm:john-stromberg}
Let \(0<\lambda<1/2\). There exist constants 
\(C_{n,\lambda}>0\) and \(C'_{n,\lambda}>0\), depending only on \(n\) and 
\(\lambda\), such that for every \(f\in L^1_{\mathrm{loc}}(\mathbb R^n)\),
\begin{equation}\label{eq:john-stromberg-upper}
    \|f\|_{BMO}
    \leq
    C_{n,\lambda}\,
    \|f\|_{BMO_\lambda^\#},
\end{equation}
and
\begin{equation}\label{eq:john-stromberg-lower}
    \|f\|_{BMO_\lambda^\#}
    \leq
    C'_{n,\lambda}\,
    \|f\|_{BMO}.
\end{equation}
\end{theorem}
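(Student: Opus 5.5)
The lower bound \eqref{eq:john-stromberg-lower} is the easy direction, and I would dispose of it first by Chebyshev. Fix \(Q\) and take \(c=\langle f\rangle_Q\) in \eqref{eq:local-oscillation}. With \(\alpha:=\lambda^{-1}\|f\|_{BMO}\), Chebyshev's inequality gives
\[
    |\{x\in Q:|f(x)-\langle f\rangle_Q|>\alpha\}|
    \le \frac{1}{\alpha}\int_Q|f-\langle f\rangle_Q|
    \le \lambda|Q|.
\]
By \eqref{eq:local-rearrangement} this means \(((f-\langle f\rangle_Q)\chi_Q)_Q^*(\lambda|Q|)\le\alpha\). Hence \(\omega_\lambda(f;Q)\le\lambda^{-1}\|f\|_{BMO}\), and we may take \(C'_{n,\lambda}=\lambda^{-1}\).

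For the upper bound \eqref{eq:john-stromberg-upper} I would use the standard Calderón--Zygmund/stopping-time argument, which proves a John--Nirenberg-type distribution estimate. Normalize \(\|f\|_{BMO_\lambda^\#}=1\).

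\textbf{Step 1 (median control).} For each cube \(P\), choose \(c_P\) nearly attaining the infimum in \(\omega_\lambda(f;P)\), so that \(|\{x\in P:|f-c_P|>2\}|\le\lambda|P|<|P|/2\). The condition \(\lambda<1/2\) is used here: any median \(m_f(P)\) must then satisfy \(|m_f(P)-c_P|\le 2\). So we may work with medians, and
\[
    |\{x\in P:|f-m_f(P)|>4\}|\le\lambda|P|.
\]

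\textbf{Step 2 (parent--child comparison).} If \(P'\) is a dyadic child of \(P\), then \(|\{x\in P':|f-m_f(P)|>4\}|\le\lambda|P|=2^n\lambda|P'|\). This is only useful when \(2^n\lambda<1/2\). To avoid a restriction on \(\lambda\), I would instead compare \(f\) on a subcube \(R\subset P\) with \(|R|\ge 2\lambda|P|\), or compare through an intermediate cube. Since \(\omega_\lambda\) is monotone in \(\lambda\), the cleanest route is to prove the estimate first for some small \(\lambda_0=\lambda_0(n)\) and then bridge to every \(\lambda<1/2\). For the bridge I would use the fact that the \(\lambda\)-oscillation on \(P\) controls the \(\lambda_0\)-oscillation on a suitable family of subcubes: cover \(P\) by its \(2^{nk}\) dyadic descendants of generation \(k\), chaining the constants \(c_R\) through overlapping neighbours with large measure fractions. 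This bridge is the main obstacle. Strömberg's classical result holds for all \(\lambda<1/2\), but the proof needs care, and the chaining through adjacent cubes sharing a large common subcube is what I expect to be delicate. If it resists, I would fall back on Strömberg's original "local sharp maximal function" argument, i.e. a good-\(\lambda\) inequality between \(M^\#_{0,\lambda}f\) and \(Mf\).

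\textbf{Step 3 (iteration).} Assume \(\lambda\le\lambda_0\) with \(2^n\lambda_0\le 1/4\). Run a Calderón--Zygmund decomposition on \(Q\) of the set where \(|f-m_f(Q)|>4\); its maximal dyadic cubes \(P_j\) have \(\sum_j|P_j|\le 2^n\lambda|Q|\) and \(|m_f(P_j)-m_f(Q)|\le C_n\). Iterating on each \(P_j\) gives
\[
    |\{x\in Q:|f-m_f(Q)|>C_n k\}|\le (2^n\lambda)^k|Q|.
\]
This exponential decay integrates to \(\frac1{|Q|}\int_Q|f-m_f(Q)|\le C\). Since replacing \(m_f(Q)\) by \(\langle f\rangle_Q\) at most doubles the left-hand side, this yields \(\|f\|_{BMO}\le C_{n,\lambda}\), as required.
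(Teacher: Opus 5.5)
The paper does not prove this theorem; it quotes it from John and Str\"omberg, so your argument has to stand on its own. Your lower bound \eqref{eq:john-stromberg-lower} is correct: Chebyshev with \(c=\langle f\rangle_Q\) gives \(\omega_\lambda(f;Q)\le\lambda^{-1}\|f\|_{BMO}\).

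The upper bound has a genuine gap. Steps 1 and 3 prove \eqref{eq:john-stromberg-upper} only for \(\lambda\le\lambda_0\) with \(2^n\lambda_0\le 1/4\). The bridge to \(\lambda_0<\lambda<1/2\) is never carried out.

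\begin{itemize}
\item Monotonicity runs the wrong way. Since \(\omega_\lambda\le\omega_{\lambda_0}\) for \(\lambda\ge\lambda_0\), the small-\(\lambda_0\) estimate is strictly weaker than the claim. The missing inequality \(\|f\|_{BMO^\#_{\lambda_0}}\lesssim\|f\|_{BMO^\#_\lambda}\) is essentially the whole content of Str\"omberg's theorem.
\item Your sketched chaining cannot supply it. Cover \(P\) by its dyadic descendants \(R\) of a fixed generation \(k\) and chain constants, so that \(|m_f(R)-m_f(P)|\le Ck\). Then \(\{x\in P:|f-m_f(P)|>4+Ck\}\subset\bigcup_R\{x\in R:|f-m_f(R)|>4\}\), and this set is only known to have measure \(\le\lambda|P|\). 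The exceptional fraction never drops below \(\lambda\), so a stopping-time selection is unavoidable.
\item The fallback to Str\"omberg's good-\(\lambda\) argument is a citation, not a proof.
\end{itemize}

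This range matters for the paper. Since \(\Lambda_{\mathbb X}\) is nondecreasing, the hypothesis of Theorem~\ref{thm:median-testing-embedding} may hold only for \(\lambda\) close to \(1/2\).

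No bridge is actually needed. Run Step 3 directly for every \(\lambda<1/2\), but stop at a height \(\mu\) with \(\lambda<\mu<1-\lambda\) (for instance \(\mu=1/2\)) instead of a height tied to \(2^n\lambda\).

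\begin{itemize}
\item Stopping cubes. With \(B_Q:=\{x\in Q:|f-m_f(Q)|>4\}\), let \(R_j\) be the maximal dyadic subcubes of \(Q\) with \(|B_Q\cap R_j|>\mu|R_j|\). Then \(\sum_j|R_j|\le(\lambda/\mu)|Q|=2\lambda|Q|\), and \(B_Q\setminus\bigcup_jR_j\) is null.
\item Parent comparison. The parent \(\widehat R_j\) satisfies \(|B_Q\cap\widehat R_j|\le\mu|\widehat R_j|\). So the sets \(\{|f-m_f(Q)|\le4\}\) and \(\{|f-m_f(\widehat R_j)|\le4\}\) inside \(\widehat R_j\), of relative measures at least \(1-\mu\) and \(1-\lambda\), must intersect. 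Hence \(|m_f(\widehat R_j)-m_f(Q)|\le8\).
\item Descent to the child. Nested cubes \(S\subset S'\) with \(|S|>\frac{\lambda}{1-\lambda}|S'|\) satisfy \(|m_f(S)-m_f(S')|\le8\), because the two good sets inside \(S\) have total measure at least \((2-\lambda)|S|-\lambda|S'|>|S|\). Connect \(\widehat R_j\) to \(R_j\) by a chain of \(N=N(n,\lambda)\) cubes anchored at their common vertex, with side ratio \(2^{-1/N}\) and \(2^{-n/N}>\frac{\lambda}{1-\lambda}\).
\end{itemize}

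This gives \(|m_f(R_j)-m_f(Q)|\le 8(N+1)\). Induction then yields \(|\{x\in Q:|f-m_f(Q)|>k(8N+12)\}|\le(2\lambda)^k|Q|\), which integrates because \(2\lambda<1\).
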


Theorem~\ref{thm:john-stromberg} is the only median characterization of classical 
\(BMO\) needed in this section. In particular, the proof below does not require a new 
John--Nirenberg inequality or a separate exponential decay argument.

Recall from Definition~\ref{def:lower-testing-functional} that for \(0<t<1\),
\begin{equation}\label{eq:section3-Lambda}
    \Lambda_{\mathbb X}(t)
    :=
    \inf_Q
    \inf_{\substack{E\subset Q\\ |E|\geq t|Q|}}
    \frac{\|\chi_E\|_{X_Q}}{\|\chi_Q\|_{X_Q}} .
\end{equation}
Thus \(\Lambda_{\mathbb X}(t)>0\) means that no set occupying at least a fixed 
proportion \(t\) of a cube can have arbitrarily small normalized \(X_Q\)-norm.

\begin{definition}\label{def:MT-condition}
Let \(0<t<1\). We say that the family 
\(\mathbb X=\{X_Q\}_{Q\subset\mathbb R^n}\) satisfies the median testing condition at 
level \(t\), denoted by \((MT)_t\), if
\begin{equation}\label{eq:MT-condition}
    \Lambda_{\mathbb X}(t)>0.
\end{equation}
\end{definition}

The restriction \(0<t<1/2\) will be imposed in 
Theorem~\ref{thm:median-testing-embedding}, because the John--Str\"omberg 
characterization in Theorem~\ref{thm:john-stromberg} is used in that range.

The next lemma is the key elementary estimate. It shows that the lower testing 
condition controls a rearrangement threshold.

\begin{lemma}\label{lem:testing-controls-rearrangement}
Let \(0<\lambda<1\) be such that
\begin{equation}\label{eq:positive-Lambda-for-lemma}
    \Lambda_{\mathbb X}(\lambda)>0.
\end{equation}
Then, for every cube \(Q\) and every \(g\in X_Q\),
\begin{equation}\label{eq:testing-controls-rearrangement}
    g_Q^*(\lambda |Q|)
    \leq
    \frac{1}{c_0\,\Lambda_{\mathbb X}(\lambda)}
    \|g\|_{X_Q},
\end{equation}
where \(c_0\) is the lower normalization constant from 
\eqref{eq:normalization}.
\end{lemma}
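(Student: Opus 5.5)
The plan is a Chebyshev-type argument. We test the norm of \(g\) against the characteristic function of a superlevel set of measure at least \(\lambda|Q|\), and use the lattice property \eqref{eq:lattice-property} together with the lower testing functional.

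Fix a cube \(Q\) and \(g\in X_Q\), and set \(s:=g_Q^*(\lambda|Q|)\). If \(s=0\) there is nothing to prove, so assume \(s>0\) and fix any \(0<\alpha<s\). By the definition \eqref{eq:local-rearrangement} of the rearrangement as an infimum, \(\alpha<s\) forces
\[
    d_{g,Q}(\alpha)>\lambda|Q| .
\]
So the set \(E_\alpha:=\{x\in Q:|g(x)|>\alpha\}\) satisfies \(|E_\alpha|\geq\lambda|Q|\) and \(E_\alpha\subset Q\).

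Next, since \(\alpha\chi_{E_\alpha}\leq|g|\chi_Q\) pointwise on \(Q\), the lattice property \eqref{eq:lattice-property} and homogeneity of the quasi-norm give
\[
    \alpha\|\chi_{E_\alpha}\|_{X_Q}\leq\|g\|_{X_Q}.
\]
By the definition \eqref{eq:Lambda-definition} of \(\Lambda_{\mathbb X}(\lambda)\), since \(E_\alpha\) is admissible,
\[
    \|\chi_{E_\alpha}\|_{X_Q}\geq\Lambda_{\mathbb X}(\lambda)\|\chi_Q\|_{X_Q}.
\]
Assumption~\ref{ass:normalization} then gives \(\|\chi_Q\|_{X_Q}\geq c_0\). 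Combining these,
\[
    \alpha\leq\frac{\|g\|_{X_Q}}{c_0\Lambda_{\mathbb X}(\lambda)} .
\]
Letting \(\alpha\uparrow s\) yields \eqref{eq:testing-controls-rearrangement}. The hypothesis \eqref{eq:positive-Lambda-for-lemma} is needed only so that the right-hand side is finite.

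The only delicate point is the strictness in the distribution function, i.e.\ obtaining \(|E_\alpha|\geq\lambda|Q|\) rather than something weaker. Choosing \(\alpha\) strictly below \(g_Q^*(\lambda|Q|)\) and passing to the limit at the end handles this. Working with \(\alpha<s\) also avoids any need for right-continuity of \(d_{g,Q}\).

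It remains to note two routine facts. First, the quasi-norm is homogeneous, \(\|\alpha h\|_{X_Q}=\alpha\|h\|_{X_Q}\), which is part of the notion of a quasi-norm. Second, \(|g|\) and \(g\) have the same \(X_Q\)-norm, which follows from the lattice property applied in both directions.
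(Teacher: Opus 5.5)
Your proof is correct and follows the paper's argument essentially step for step. You take a level strictly below \(g_Q^*(\lambda|Q|)\) and use the corresponding superlevel set as an admissible set for \(\Lambda_{\mathbb X}(\lambda)\). You then apply the lattice property and the normalization \(\|\chi_Q\|_{X_Q}\geq c_0\), and finally let the level increase to \(g_Q^*(\lambda|Q|)\).
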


\begin{proof}
Fix a cube \(Q\) and \(g\in X_Q\). Set
\[
    a:=g_Q^*(\lambda |Q|).
\]
If \(a=0\), then \eqref{eq:testing-controls-rearrangement} is immediate. Assume 
\(a>0\). Let \(0<b<a\). By the definition of the rearrangement in 
\eqref{eq:local-rearrangement},
\begin{equation}\label{eq:large-level-set-section3}
    |\{x\in Q: |g(x)|>b\}|
    >
    \lambda |Q|.
\end{equation}
Define
\begin{equation}\label{eq:Eb-definition-section3}
    E_b
    :=
    \{x\in Q: |g(x)|>b\}.
\end{equation}
Then \(E_b\) is admissible in the definition of 
\(\Lambda_{\mathbb X}(\lambda)\). Since \(|g|\geq b\chi_{E_b}\), the lattice property 
\eqref{eq:lattice-property} gives
\[
    \|g\|_{X_Q}
    \geq
    b\|\chi_{E_b}\|_{X_Q}.
\]
By \eqref{eq:section3-Lambda} and \eqref{eq:normalization},
\[
    \|\chi_{E_b}\|_{X_Q}
    \geq
    \Lambda_{\mathbb X}(\lambda)\|\chi_Q\|_{X_Q}
    \geq
    c_0\Lambda_{\mathbb X}(\lambda).
\]
Hence
\[
    b
    \leq
    \frac{1}{c_0\Lambda_{\mathbb X}(\lambda)}
    \|g\|_{X_Q}.
\]
Letting \(b\uparrow a\) proves 
\eqref{eq:testing-controls-rearrangement}.
\end{proof}

\begin{corollary}\label{cor:testing-controls-median-oscillation}
Let \(0<\lambda<1\) be such that 
\(\Lambda_{\mathbb X}(\lambda)>0\). Then, for every measurable function \(f\) and 
every cube \(Q\),
\begin{equation}\label{eq:omega-controlled-by-X}
    \omega_\lambda(f;Q)
    \leq
    \frac{1}{c_0\,\Lambda_{\mathbb X}(\lambda)}
    \inf_{c\in\mathbb R}
    \|(f-c)\chi_Q\|_{X_Q}.
\end{equation}
\end{corollary}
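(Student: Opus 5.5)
The corollary follows directly from Lemma~\ref{lem:testing-controls-rearrangement}: apply it to \(g=(f-c)\chi_Q\) for each admissible constant \(c\), then take infima on both sides.

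Fix a cube \(Q\) and a measurable \(f\). If no \(c\in\mathbb R\) gives \((f-c)\chi_Q\in X_Q\), then the right-hand side of \eqref{eq:omega-controlled-by-X} is \(+\infty\) under the usual convention that the quasi-norm is infinite outside \(X_Q\). In that case there is nothing to prove.

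Otherwise, take any \(c\) with \(g_c:=(f-c)\chi_Q\in X_Q\). Lemma~\ref{lem:testing-controls-rearrangement} gives
\[
    (g_c)_Q^*(\lambda|Q|)
    \leq
    \frac{1}{c_0\Lambda_{\mathbb X}(\lambda)}\|(f-c)\chi_Q\|_{X_Q}.
\]
By the definition \eqref{eq:local-oscillation}, \(\omega_\lambda(f;Q)\) is the infimum of the left-hand side over all \(c\in\mathbb R\). It is therefore at most the left-hand side for this particular \(c\), and hence at most the right-hand side. Taking the infimum over all admissible \(c\) then yields \eqref{eq:omega-controlled-by-X}. Constants \(c\) with \((f-c)\chi_Q\notin X_Q\) contribute \(+\infty\) to the right-hand infimum and can be ignored.

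The only point needing care, though it is hardly an obstacle, is this bookkeeping of constants \(c\) for which \((f-c)\chi_Q\notin X_Q\), together with the fact that the lemma is stated only for \(g\in X_Q\). Two further remarks complete the argument. Since \(\Lambda_{\mathbb X}(\lambda)>0\) and \(c_0>0\), the factor in front is finite and positive, so multiplying through by it preserves the inequality between infima. No integrability of \(f\) is used: the rearrangement \((g_c)_Q^*\) is defined for every measurable function by \eqref{eq:local-rearrangement}, so the statement holds for arbitrary measurable \(f\), as claimed.
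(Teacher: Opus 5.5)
Your proposal is correct and matches the paper's proof: fix \(c\), treat the case \((f-c)\chi_Q\notin X_Q\) as trivial, apply Lemma~\ref{lem:testing-controls-rearrangement} to \(g=(f-c)\chi_Q\), and take the infimum over \(c\in\mathbb R\). Your additional remarks, on the \(+\infty\) convention and on the fact that no integrability of \(f\) is used, are accurate and are only slightly more explicit than the paper's version.
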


\begin{proof}
Fix \(c\in\mathbb R\). If \((f-c)\chi_Q\notin X_Q\), then the desired estimate is 
trivial for this choice of \(c\). Otherwise, apply 
Lemma~\ref{lem:testing-controls-rearrangement} to
\[
    g=(f-c)\chi_Q .
\]
This gives
\[
    \bigl((f-c)\chi_Q\bigr)_Q^*(\lambda |Q|)
    \leq
    \frac{1}{c_0\,\Lambda_{\mathbb X}(\lambda)}
    \|(f-c)\chi_Q\|_{X_Q}.
\]
Taking the infimum over \(c\in\mathbb R\) gives 
\eqref{eq:omega-controlled-by-X}.
\end{proof}

We now prove the median-testing criterion.

\begin{theorem}\label{thm:median-testing-embedding}
Let \(\mathbb X=\{X_Q\}_{Q\subset\mathbb R^n}\) be a normalized family of 
quasi-Banach function spaces satisfying Assumption~\ref{ass:normalization}. Suppose 
that there exists \(0<\lambda<1/2\) such that
\begin{equation}\label{eq:positive-Lambda-assumption}
    \Lambda_{\mathbb X}(\lambda)>0 .
\end{equation}
Then
\begin{equation}\label{eq:BMO-star-embedding-result}
    BMO_{\mathbb X}^{*}\cap L^1_{\mathrm{loc}}(\mathbb R^n)
    \hookrightarrow
    BMO .
\end{equation}
More precisely, for every 
\(f\in BMO_{\mathbb X}^{*}\cap L^1_{\mathrm{loc}}(\mathbb R^n)\),
\begin{equation}\label{eq:main-embedding-estimate}
    \|f\|_{BMO}
    \leq
    \frac{C_{n,\lambda}}{c_0\,\Lambda_{\mathbb X}(\lambda)}
    \|f\|_{BMO_{\mathbb X}^{*}},
\end{equation}
where \(C_{n,\lambda}\) is the constant from 
\eqref{eq:john-stromberg-upper}.
\end{theorem}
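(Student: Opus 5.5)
The argument will be a short composition of Corollary~\ref{cor:testing-controls-median-oscillation} with the upper John--Str\"omberg estimate \eqref{eq:john-stromberg-upper}. Fix \(f\in BMO_{\mathbb X}^{*}\cap L^1_{\mathrm{loc}}(\mathbb R^n)\) and the level \(0<\lambda<1/2\) from \eqref{eq:positive-Lambda-assumption}.

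\textbf{Step 1: cube-by-cube bound.} For every cube \(Q\), Corollary~\ref{cor:testing-controls-median-oscillation}, applied with this \(\lambda\), gives
\[
    \omega_\lambda(f;Q)
    \leq
    \frac{1}{c_0\,\Lambda_{\mathbb X}(\lambda)}
    \inf_{c\in\mathbb R}\|(f-c)\chi_Q\|_{X_Q}
    \leq
    \frac{1}{c_0\,\Lambda_{\mathbb X}(\lambda)}\|f\|_{BMO_{\mathbb X}^{*}} .
\]
Here I use that the right-hand side of \eqref{eq:BMOX-star-norm} is a supremum over cubes. Since \(f\in BMO_{\mathbb X}^{*}\), that supremum is finite, so for each \(Q\) there are admissible constants \(c\) with \((f-c)\chi_Q\in X_Q\). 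This means the infimum in the corollary is not vacuous.

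\textbf{Step 2: supremum over cubes.} Taking the supremum over \(Q\) and using \eqref{eq:sharp-median-bmo} yields
\[
    \|f\|_{BMO_\lambda^\#}\leq \frac{1}{c_0\Lambda_{\mathbb X}(\lambda)}\|f\|_{BMO_{\mathbb X}^{*}} .
\]

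\textbf{Step 3: pass to classical \(BMO\).} Because \(f\in L^1_{\mathrm{loc}}\) and \(0<\lambda<1/2\), Theorem~\ref{thm:john-stromberg} applies. Estimate \eqref{eq:john-stromberg-upper} then gives \(\|f\|_{BMO}\leq C_{n,\lambda}\|f\|_{BMO_\lambda^\#}\). Combining this with Step 2 produces exactly \eqref{eq:main-embedding-estimate}. The continuous embedding \eqref{eq:BMO-star-embedding-result} is just a restatement of this norm inequality: it says the intersection sits inside \(BMO\) with seminorm control.

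\textbf{Main obstacle.} The only delicate point is bookkeeping, not analysis. Local integrability is needed precisely so that \(\|f\|_{BMO}\) is defined and John--Str\"omberg can be invoked; this is why the intersection with \(L^1_{\mathrm{loc}}\) appears in the statement. The restriction \(\lambda<1/2\) is needed precisely for the John--Str\"omberg range. One should also check that the corollary's convention (trivial estimate when \((f-c)\chi_Q\notin X_Q\), with norm read as \(+\infty\)) is compatible with taking infima. No new John--Nirenberg type argument is required.
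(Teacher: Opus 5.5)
Your proposal is correct and matches the paper's proof essentially step for step. Both apply Corollary~\ref{cor:testing-controls-median-oscillation} cube by cube, take the supremum over cubes to get \(\|f\|_{BMO_\lambda^\#}\le (c_0\Lambda_{\mathbb X}(\lambda))^{-1}\|f\|_{BMO_{\mathbb X}^{*}}\), and then invoke the upper John--Str\"omberg bound \eqref{eq:john-stromberg-upper}; your remark that the infimum is non-vacuous is harmless bookkeeping, which the paper handles through the trivial case in the corollary.
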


\begin{proof}
Let \(f\in BMO_{\mathbb X}^{*}\cap L^1_{\mathrm{loc}}(\mathbb R^n)\). By 
Corollary~\ref{cor:testing-controls-median-oscillation}, for every cube \(Q\),
\[
    \omega_\lambda(f;Q)
    \leq
    \frac{1}{c_0\,\Lambda_{\mathbb X}(\lambda)}
    \inf_{c\in\mathbb R}
    \|(f-c)\chi_Q\|_{X_Q}.
\]
Taking the supremum over all cubes \(Q\), we obtain
\begin{equation}\label{eq:sharp-bmo-controlled-by-star}
    \|f\|_{BMO_\lambda^\#}
    \leq
    \frac{1}{c_0\,\Lambda_{\mathbb X}(\lambda)}
    \|f\|_{BMO_{\mathbb X}^{*}} .
\end{equation}
Since \(0<\lambda<1/2\), Theorem~\ref{thm:john-stromberg} applies. Therefore, by 
\eqref{eq:john-stromberg-upper},
\[
    \|f\|_{BMO}
    \leq
    C_{n,\lambda}\|f\|_{BMO_\lambda^\#}.
\]
Combining this estimate with \eqref{eq:sharp-bmo-controlled-by-star} yields 
\eqref{eq:main-embedding-estimate}. This proves 
\eqref{eq:BMO-star-embedding-result}.
\end{proof}

\begin{remark}\label{rem:local-integrability}
The local integrability assumption in \eqref{eq:BMO-star-embedding-result} is included 
because the classical \(BMO\) seminorm in \eqref{eq:classical-bmo} is defined through 
local averages. If one works with the median seminorm 
\(\|f\|_{BMO_\lambda^\#}\), then the conclusion can be formulated without explicitly 
using local averages.
\end{remark}

\begin{remark}\label{rem:no-exponential-claim}
The proof of Theorem~\ref{thm:median-testing-embedding} does not assert an exponential 
John--Nirenberg inequality from the testing condition alone. The condition 
\(\Lambda_{\mathbb X}(\lambda)>0\) controls a fixed local oscillation quantile. The 
passage from this quantile control to classical \(BMO\) is supplied by 
Theorem~\ref{thm:john-stromberg}.
\end{remark}

We record several immediate consequences of 
Theorem~\ref{thm:median-testing-embedding}.

\begin{proposition}[Normalized \(L^p\) spaces]\label{prop:Lp-example-median}
Let \(1\leq p<\infty\), and let \(X_Q\) be defined by 
\eqref{eq:normalized-Lp}. Then, for every \(0<\lambda<1\),
\begin{equation}\label{eq:Lp-Lambda-section3}
    \Lambda_{\mathbb X}(\lambda)=\lambda^{1/p}.
\end{equation}
Consequently,
\begin{equation}\label{eq:Lp-BMO-star-embedding}
    BMO_{\mathbb X}^{*}\cap L^1_{\mathrm{loc}}(\mathbb R^n)
    \hookrightarrow BMO .
\end{equation}
\end{proposition}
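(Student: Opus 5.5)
The plan is to compute \(\Lambda_{\mathbb X}(\lambda)\) directly from the definition and then invoke Theorem~\ref{thm:median-testing-embedding}. For the normalized \(L^p\) norm \eqref{eq:normalized-Lp}, we have \(\|\chi_Q\|_{X_Q}=1\). For any measurable \(E\subset Q\),
\[
    \|\chi_E\|_{X_Q}=\Bigl(\frac{|E|}{|Q|}\Bigr)^{1/p}.
\]
So the ratio in \eqref{eq:Lambda-definition} depends only on the relative measure \(|E|/|Q|\), and it is nondecreasing in that quantity. Taking the infimum over \(E\subset Q\) with \(|E|\geq\lambda|Q|\) therefore gives at least \(\lambda^{1/p}\). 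The value \(\lambda^{1/p}\) is attained: Lebesgue measure is nonatomic, so one can pick \(E\subset Q\) with \(|E|=\lambda|Q|\), for instance a subbox \(Q\cap\{x_1\leq a\}\) with \(a\) chosen suitably. This infimum is independent of \(Q\), so the outer infimum over cubes yields \eqref{eq:Lp-Lambda-section3}.

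For the embedding \eqref{eq:Lp-BMO-star-embedding}, I will fix any \(\lambda\in(0,1/2)\), say \(\lambda=1/4\). Then \(\Lambda_{\mathbb X}(\lambda)=\lambda^{1/p}>0\). Assumption~\ref{ass:normalization} holds with \(c_0=C_0=1\), as noted in Example~\ref{ex:normalized-Lp}, and each \(X_Q\) is a Banach function space; in particular the Fatou property holds by monotone convergence. Theorem~\ref{thm:median-testing-embedding} then gives
\[
    \|f\|_{BMO}\leq C_{n,\lambda}\lambda^{-1/p}\|f\|_{BMO_{\mathbb X}^{*}}.
\]

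There is no real obstacle here. The only points to check are the attainment of the infimum, which uses nonatomicity, and the verification that the \(X_Q\) satisfy the standing quasi-Banach function space axioms; both are routine.
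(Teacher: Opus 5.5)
Your proposal is correct and follows the paper's proof: compute \(\|\chi_E\|_{X_Q}=(|E|/|Q|)^{1/p}\) and \(\|\chi_Q\|_{X_Q}=1\), take the infimum over \(|E|\geq\lambda|Q|\), and apply Theorem~\ref{thm:median-testing-embedding}. You also spell out two details the paper leaves implicit: that the infimum is attained by a subbox, and that one must fix some \(\lambda\in(0,1/2)\) because the theorem is only stated in that range.
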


\begin{proof}
For every measurable set \(E\subset Q\), the normalized \(L^p\)-definition gives
\[
    \|\chi_E\|_{X_Q}
    =
    \left(\frac{|E|}{|Q|}\right)^{1/p}.
\]
Since \(\|\chi_Q\|_{X_Q}=1\), taking the infimum over all 
\(E\subset Q\) with \(|E|\geq \lambda |Q|\) gives 
\eqref{eq:Lp-Lambda-section3}. The embedding 
\eqref{eq:Lp-BMO-star-embedding} follows from 
Theorem~\ref{thm:median-testing-embedding}.
\end{proof}

\begin{proposition}[Weighted \(L^1\) spaces]\label{prop:weighted-example-median}
Let \(w\) be a locally integrable weight with \(0<w(Q)<\infty\) for every cube \(Q\), 
and let \(X_Q\) be defined by \eqref{eq:weighted-L1-normalized}. Then
\begin{equation}\label{eq:weighted-Lambda}
    \Lambda_{\mathbb X}(\lambda)
    =
    \inf_Q
    \inf_{\substack{E\subset Q\\ |E|\geq \lambda |Q|}}
    \frac{w(E)}{w(Q)} .
\end{equation}
In particular, if there exist \(0<\lambda<1/2\) and \(c>0\) such that
\begin{equation}\label{eq:weighted-lower-density}
    \frac{w(E)}{w(Q)}
    \geq
    c
\end{equation}
for every cube \(Q\) and every measurable \(E\subset Q\) with 
\(|E|\geq \lambda |Q|\), then
\begin{equation}\label{eq:weighted-BMO-star-embedding}
    BMO_{\mathbb X}^{*}\cap L^1_{\mathrm{loc}}(\mathbb R^n)
    \hookrightarrow BMO .
\end{equation}
\end{proposition}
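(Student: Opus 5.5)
The plan is to reduce everything to two facts already in hand: the identity \eqref{eq:weighted-characteristic} from Example~\ref{ex:weighted-L1} and Theorem~\ref{thm:median-testing-embedding}. No harmonic analysis beyond what Theorem~\ref{thm:median-testing-embedding} already packages should be needed.

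\textbf{Step 1 (the family is admissible).} I first check that each \(X_Q\) given by \eqref{eq:weighted-L1-normalized} is a Banach function space on \((Q,dx)\). It is a weighted \(L^1\) norm, so the lattice property \eqref{eq:lattice-property} and the triangle inequality (with \(C_X=1\)) are immediate. The Fatou property \eqref{eq:fatou-property} follows from the monotone convergence theorem applied to \(\int_Q |g|\,w\,dx\). Since \(\|\chi_Q\|_{X_Q}=1\), Assumption~\ref{ass:normalization} holds with \(c_0=C_0=1\).

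\textbf{Step 2 (the formula \eqref{eq:weighted-Lambda}).} For measurable \(E\subset Q\), \eqref{eq:weighted-characteristic} gives \(\|\chi_E\|_{X_Q}=w(E)/w(Q)\). Dividing by \(\|\chi_Q\|_{X_Q}=1\) and substituting into Definition~\ref{def:lower-testing-functional} yields \eqref{eq:weighted-Lambda} verbatim.

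\textbf{Step 3 (the embedding).} Hypothesis \eqref{eq:weighted-lower-density} says that every admissible pair \((Q,E)\) in \eqref{eq:weighted-Lambda} has ratio at least \(c\). Taking infima gives \(\Lambda_{\mathbb X}(\lambda)\ge c>0\) for the given \(\lambda\in(0,1/2)\). Theorem~\ref{thm:median-testing-embedding} then yields \eqref{eq:weighted-BMO-star-embedding}, with the explicit bound
\[
\|f\|_{BMO}\le \frac{C_{n,\lambda}}{c}\,\|f\|_{BMO_{\mathbb X}^{*}} .
\]

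There is no real obstacle here. The only point needing any care is Step 1, namely that the Fatou property and the normalization genuinely hold. This is where the assumption \(0<w(Q)<\infty\) is used, since it makes the normalization in \eqref{eq:weighted-L1-normalized} meaningful.
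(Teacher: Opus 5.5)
Your proposal is correct and follows essentially the same route as the paper: read off \(\|\chi_E\|_{X_Q}=w(E)/w(Q)\) from \eqref{eq:weighted-characteristic}, deduce \(\Lambda_{\mathbb X}(\lambda)\ge c>0\), and invoke Theorem~\ref{thm:median-testing-embedding}. Your Step 1 check of the Banach function space axioms and of the normalization \(c_0=C_0=1\) is something the paper takes for granted, and your constant \(C_{n,\lambda}/c\) matches \eqref{eq:main-embedding-estimate}.
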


\begin{proof}
By \eqref{eq:weighted-characteristic},
\[
    \|\chi_E\|_{X_Q}
    =
    \frac{w(E)}{w(Q)}
\]
for every measurable \(E\subset Q\). Since \(\|\chi_Q\|_{X_Q}=1\), formula 
\eqref{eq:weighted-Lambda} follows directly from 
\eqref{eq:section3-Lambda}. If \eqref{eq:weighted-lower-density} holds, then 
\(\Lambda_{\mathbb X}(\lambda)\geq c>0\). The embedding 
\eqref{eq:weighted-BMO-star-embedding} follows from 
Theorem~\ref{thm:median-testing-embedding}.
\end{proof}

\begin{proposition}[Localized Banach function spaces]\label{prop:localized-BFS}
Let \(X\) be a Banach function space on \(\mathbb R^n\) satisfying 
\(0<\|\chi_Q\|_X<\infty\) for every cube \(Q\), and define \(X_Q\) by 
\eqref{eq:localized-global-space}. Suppose that for some \(0<\lambda<1/2\),
\begin{equation}\label{eq:localized-lower-testing}
    \inf_Q
    \inf_{\substack{E\subset Q\\ |E|\geq \lambda |Q|}}
    \frac{\|\chi_E\|_X}{\|\chi_Q\|_X}
    >0 .
\end{equation}
Then
\begin{equation}\label{eq:localized-BFS-embedding}
    BMO_{\mathbb X}^{*}\cap L^1_{\mathrm{loc}}(\mathbb R^n)
    \hookrightarrow BMO .
\end{equation}
\end{proposition}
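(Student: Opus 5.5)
The plan is to reduce this to Theorem~\ref{thm:median-testing-embedding}: identify \(\Lambda_{\mathbb X}(\lambda)\) for the localized family with the quantity in \eqref{eq:localized-lower-testing}, and check that the family is admissible.

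First I verify the structural hypotheses. Since \(X\) is a Banach function space on \(\mathbb R^n\), each \(X_Q\) defined by \eqref{eq:localized-global-space} is a Banach function space over \((Q,dx)\). The lattice property \eqref{eq:lattice-property} follows from that of \(X\) applied to \(g\chi_Q\). The triangle inequality passes directly, so \(C_{X_Q}=1\). The Fatou property is inherited, because \(0\le g_j\uparrow g\) on \(Q\) gives \(g_j\chi_Q\uparrow g\chi_Q\), and the normalizing factor \(\|\chi_Q\|_X\in(0,\infty)\) is a fixed constant for each \(Q\). Moreover \(\|\chi_Q\|_{X_Q}=1\), so Assumption~\ref{ass:normalization} holds with \(c_0=C_0=1\), as already noted in Example~\ref{ex:localized-global-space}.

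Next I compute the lower testing functional. For measurable \(E\subset Q\) we have \(\chi_E\chi_Q=\chi_E\), so
\[
\frac{\|\chi_E\|_{X_Q}}{\|\chi_Q\|_{X_Q}}=\frac{\|\chi_E\|_X}{\|\chi_Q\|_X}.
\]
Taking the infimum over \(E\subset Q\) with \(|E|\ge\lambda|Q|\) and then over all cubes \(Q\) shows that \(\Lambda_{\mathbb X}(\lambda)\) equals the left-hand side of \eqref{eq:localized-lower-testing}. It is therefore positive for the given \(0<\lambda<1/2\).

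Finally, Theorem~\ref{thm:median-testing-embedding} applies and gives \eqref{eq:localized-BFS-embedding}, with the explicit bound \(\|f\|_{BMO}\le C_{n,\lambda}\Lambda_{\mathbb X}(\lambda)^{-1}\|f\|_{BMO_{\mathbb X}^*}\). There is no real obstacle here. The only point needing a moment's care is the Fatou property of \(X_Q\), since the theorem is stated for quasi-Banach function spaces. In any case, the proof of Theorem~\ref{thm:median-testing-embedding} uses only the lattice property and the normalization, so even that point is not essential.
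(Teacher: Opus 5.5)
Your proposal is correct and follows the paper's proof: for the localized family the ratio $\|\chi_E\|_{X_Q}/\|\chi_Q\|_{X_Q}$ equals $\|\chi_E\|_X/\|\chi_Q\|_X$, so the hypothesis says exactly that $\Lambda_{\mathbb X}(\lambda)>0$, and Theorem~\ref{thm:median-testing-embedding} then gives the embedding. Your extra check that each $X_Q$ inherits the lattice, triangle and Fatou properties, with $c_0=C_0=1$, is a harmless addition that the paper leaves implicit.
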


\begin{proof}
By the definition of the localized norm in 
\eqref{eq:localized-global-space},
\[
    \frac{\|\chi_E\|_{X_Q}}{\|\chi_Q\|_{X_Q}}
    =
    \frac{\|\chi_E\|_X}{\|\chi_Q\|_X}.
\]
Thus \eqref{eq:localized-lower-testing} is exactly the condition 
\(\Lambda_{\mathbb X}(\lambda)>0\). The result follows from 
Theorem~\ref{thm:median-testing-embedding}.
\end{proof}

The condition \(\Lambda_{\mathbb X}(\lambda)>0\) is a natural sufficient condition for 
the embedding
\begin{equation}\label{eq:section3-main-embedding}
    BMO_{\mathbb X}^{*}\cap L^1_{\mathrm{loc}}(\mathbb R^n)
    \hookrightarrow BMO.
\end{equation}
However, in the generality considered here, it should not be presented as a full 
characterization without additional structural assumptions on the family 
\(\mathbb X\). A localized estimate on one cube does not automatically control the 
global seminorm in \eqref{eq:BMOX-star-norm}, since that seminorm is a supremum over 
all cubes.

For this reason, the result of this section is the sufficient implication
\begin{equation}\label{eq:MT-implies-embedding}
    \Lambda_{\mathbb X}(\lambda)>0
    \quad\Longrightarrow\quad
    BMO_{\mathbb X}^{*}\cap L^1_{\mathrm{loc}}(\mathbb R^n)
    \hookrightarrow BMO
\end{equation}
for some \(0<\lambda<1/2\). Necessity requires additional localization or realization 
hypotheses and is not asserted here.

\section{Sparse testing criteria for the embedding 
\(BMO\hookrightarrow BMO_{\mathbb X}\)}
\label{sec:sparse-testing}

In this section we study the embedding
\begin{equation}\label{eq:section4-main-embedding}
    BMO\hookrightarrow BMO_{\mathbb X}.
\end{equation}
This direction is governed by sparse geometry. More precisely, the relevant question is 
whether the local quasi-norms \(X_Q\) can uniformly absorb sparse sums of characteristic 
functions. This leads to the sparse testing seminorm \(T_{\mathbb X}\) introduced in 
\eqref{eq:TX-definition}.

Throughout this section, \(\mathbb X=\{X_Q\}_{Q\subset\mathbb R^n}\) is a normalized 
family of quasi-Banach function spaces satisfying Assumption~\ref{ass:normalization}. 
Whenever quasi-Banach summability is needed, we also assume 
Assumption~\ref{ass:rho-subadditivity}.

We begin with the sparse domination statement needed in this section. The formulation 
below is adapted to the stopping-tree definition of sparsity in 
Definition~\ref{def:sparse-family}.

\begin{lemma}[Sparse domination of local \(BMO\) oscillation]
\label{lem:sparse-domination-bmo}
There exist constants \(0<\eta_0<1\) and \(C_n>0\), depending only on the dimension, 
with the following property. For every \(f\in BMO(\mathbb R^n)\) and every cube 
\(Q\subset\mathbb R^n\), there exists an \(\eta_0\)-sparse family 
\(\mathcal S\subset\mathcal D(Q)\), in the sense of 
Definition~\ref{def:sparse-family}, such that
\begin{equation}\label{eq:sparse-domination-bmo}
    |f(x)-\langle f\rangle_Q|\chi_Q(x)
    \leq
    C_n\|f\|_{BMO}
    \sum_{P\in\mathcal S}\chi_P(x)
\end{equation}
for almost every \(x\in Q\).
\end{lemma}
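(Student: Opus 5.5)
The plan is a Calder\'on--Zygmund stopping-time construction inside \(\mathcal D(Q)\), iterated to build a stopping tree whose nodes form the family \(\mathcal S\). We may assume \(\|f\|_{BMO}>0\); otherwise \(f\) is a.e. constant and \eqref{eq:sparse-domination-bmo} is trivial. Fix a constant \(A>1\), say \(A=2\), and put \(\mathcal S_0=\{Q\}\).

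Given a cube \(P\in\mathcal S_k\), we consider the function \(f-\langle f\rangle_P\) on \(P\). We let \(\operatorname{st}(P)\) be the collection of maximal cubes \(R\in\mathcal D(P)\), \(R\neq P\), such that
\[
\frac{1}{|R|}\int_R|f-\langle f\rangle_P|\,dx> A\|f\|_{BMO}.
\]
These cubes are pairwise disjoint. The weak-type estimate for the dyadic maximal function, or simply summing the defining inequality over the disjoint cubes, gives
\[
\sum_{R\in\operatorname{st}(P)}|R|\le \frac{1}{A\|f\|_{BMO}}\int_P|f-\langle f\rangle_P|\,dx\le \frac{|P|}{A}.
\]
We set \(\mathcal S_{k+1}=\bigcup_{P\in\mathcal S_k}\operatorname{st}(P)\) and \(\mathcal S=\bigcup_k\mathcal S_k\).

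Next we check that \(\operatorname{ch}_{\mathcal S}(P)=\operatorname{st}(P)\). Every cube of \(\mathcal S\) lying strictly inside \(P\) belongs to a later generation, and hence is contained in some cube of \(\operatorname{st}(P)\). Since the cubes of \(\operatorname{st}(P)\) are themselves in \(\mathcal S\), they are exactly the maximal ones. Consequently \eqref{eq:sparse-children-condition} holds with \(1-\eta_0=1/A\), that is, \(\eta_0=1/2\), and \(\mathcal S\) is \(\eta_0\)-sparse in the sense of Definition~\ref{def:sparse-family}.

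The pointwise estimate rests on two local facts for \(P\in\mathcal S\).
\begin{enumerate}
\item For a.e. \(x\in E_P=P\setminus\bigcup_{R\in\operatorname{st}(P)}R\), every dyadic subcube of \(P\) containing \(x\) has average of \(|f-\langle f\rangle_P|\) at most \(A\|f\|_{BMO}\). By the Lebesgue differentiation theorem this gives \(|f(x)-\langle f\rangle_P|\le A\|f\|_{BMO}\).
\item For \(R\in\operatorname{st}(P)\), let \(\widehat R\) be its dyadic parent. By maximality, \(\widehat R\) either equals \(P\) or is not selected. In both cases its average of \(|f-\langle f\rangle_P|\) is at most \(A\|f\|_{BMO}\); when \(\widehat R=P\) this holds because that average is at most \(\|f\|_{BMO}\). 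Hence
\[
|\langle f\rangle_R-\langle f\rangle_P|\le \frac{1}{|R|}\int_R|f-\langle f\rangle_P|\le 2^n\frac{1}{|\widehat R|}\int_{\widehat R}|f-\langle f\rangle_P|\le 2^nA\|f\|_{BMO}.
\]
\end{enumerate}

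Now take \(x\in Q\) and let \(Q=P_0\supsetneq P_1\supsetneq\cdots\supsetneq P_m\) be the chain of cubes of \(\mathcal S\) containing \(x\). By Lemma~\ref{lem:sparse-generations}, \(|\Omega_k|\le 2^{-k}|Q|\), so the set of points lying in infinitely many generations has measure zero. For a.e. \(x\) the chain is therefore finite, and \(x\in E_{P_m}\). Telescoping, we write
\[
|f(x)-\langle f\rangle_Q|\le |f(x)-\langle f\rangle_{P_m}|+\sum_{j=1}^{m}|\langle f\rangle_{P_j}-\langle f\rangle_{P_{j-1}}|.
\]
By fact (1) the first term is at most \(A\|f\|_{BMO}\), and by fact (2) each summand is at most \(2^nA\|f\|_{BMO}\). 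Altogether,
\[
|f(x)-\langle f\rangle_Q|\le (2^n+1)A\|f\|_{BMO}\,(m+1)=C_n\|f\|_{BMO}\sum_{P\in\mathcal S}\chi_P(x),
\]
which is \eqref{eq:sparse-domination-bmo} with \(C_n=2(2^n+1)\) and \(\eta_0=1/2\).

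The main point requiring care is the exact identification \(\operatorname{ch}_{\mathcal S}(P)=\operatorname{st}(P)\). The stopping-tree form of sparsity, rather than the weaker major-subset formulation, is what Definition~\ref{def:sparse-family} requires, and this identification is what delivers it. The remaining point is the a.e. finiteness of chains, which follows from the generation decay.
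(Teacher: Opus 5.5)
Your proof is correct, but it takes a different route from the paper's.

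\textbf{The paper's route.} The paper does not build the stopping tree itself. It quotes Lerner's local oscillation formula, which dominates \(|f-m_f(Q)|\chi_Q\) by \(C_n\sum_{P\in\mathcal S}\omega_\lambda(f;P)\chi_P\) with median oscillations at level \(\lambda<2^{-n-2}\). It then asserts that Lerner's construction already has the stopping-tree form \(\sum_{R\in\operatorname{ch}_{\mathcal S}(P)}|R|\le\frac12|P|\), with \(Q\) as its top cube. Next it bounds each \(\omega_\lambda(f;P)\) by a multiple of \(\|f\|_{BMO}\) using John--Str\"omberg. Finally it replaces the median \(m_f(Q)\) by \(\langle f\rangle_Q\).

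\textbf{Your route.} You run the Calder\'on--Zygmund stopping time directly on the mean oscillation \(|f-\langle f\rangle_P|\) at height \(2\|f\|_{BMO}\). You control the telescoping differences \(|\langle f\rangle_{P_j}-\langle f\rangle_{P_{j-1}}|\) through the parent-cube bound \(2^nA\|f\|_{BMO}\). You control the terminal term by dyadic Lebesgue differentiation on \(E_{P_m}\). The a.e.\ exceptional sets over the countably many \(P\in\mathcal S\) combine harmlessly.

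\textbf{What each approach buys.} Your argument is self-contained and uses no medians, no mean--median comparison and no external theorem. It gives explicit constants \(\eta_0=1/2\) and \(C_n=2(2^n+1)\). More importantly for Definition~\ref{def:sparse-family}, you actually verify the identification \(\operatorname{ch}_{\mathcal S}(P)=\operatorname{st}(P)\) and that \(Q\in\mathcal S\); the paper only asserts these about Lerner's construction. The paper's route is shorter given the literature and stays within the median-oscillation viewpoint of Section~\ref{sec:median-testing}. Lerner's formula applies to arbitrary measurable functions, not just \(BMO\) ones. For the lemma as stated, though, your elementary argument (essentially the standard stopping-time proof of the John--Nirenberg inequality) is fully sufficient.
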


\begin{proof}
We recall Lerner's local oscillation formula; see 
\cite{Lerner2010,Lerner2016}. Fix \(0<\lambda<2^{-n-2}\). For every locally 
integrable function \(f\) and every cube \(Q\), there exists a family 
\(\mathcal S\subset\mathcal D(Q)\) such that
\begin{equation}\label{eq:lerner-local-oscillation}
    |f(x)-m_f(Q)|\chi_Q(x)
    \leq
    C_n
    \sum_{P\in\mathcal S}
    \omega_\lambda(f;P)\chi_P(x)
\end{equation}
for almost every \(x\in Q\). Moreover, the stopping-time construction gives
\begin{equation}\label{eq:lerner-stopping-sparse}
    \sum_{R\in\operatorname{ch}_{\mathcal S}(P)} |R|
    \leq
    \frac12 |P|,
    \qquad P\in\mathcal S.
\end{equation}
Thus \(\mathcal S\) is \(\eta_0\)-sparse in the sense of 
Definition~\ref{def:sparse-family}, with \(\eta_0=1/2\).

Since \(f\in BMO\), Theorem~\ref{thm:john-stromberg} gives
\begin{equation}\label{eq:omega-bmo-bound-section4}
    \omega_\lambda(f;P)
    \leq
    C_{n,\lambda}\|f\|_{BMO}
\end{equation}
for every cube \(P\). Substituting \eqref{eq:omega-bmo-bound-section4} into 
\eqref{eq:lerner-local-oscillation}, we obtain
\begin{equation}\label{eq:median-sparse-bmo}
    |f(x)-m_f(Q)|\chi_Q(x)
    \leq
    C_n\|f\|_{BMO}
    \sum_{P\in\mathcal S}\chi_P(x)
\end{equation}
for almost every \(x\in Q\).
In this construction the initial cube \(Q\) is included as the top cube of 
\(\mathcal S\), and its \(\mathcal S\)-children satisfy the same stopping condition 
\[
    \sum_{R\in\operatorname{ch}_{\mathcal S}(Q)} |R|
    \leq \frac12 |Q|.
\]
Thus the resulting family is sparse in the sense of Definition~\ref{def:sparse-family}.
It remains to replace the median \(m_f(Q)\) by the average \(\langle f\rangle_Q\). Since
\[
    |\langle f\rangle_Q-m_f(Q)|
    \leq
    \frac{1}{|Q|}
    \int_Q |f(x)-m_f(Q)|\,dx,
\]
and since the median form of the \(BMO\) seminorm is equivalent to the classical one by 
Theorem~\ref{thm:john-stromberg}, we have
\begin{equation}\label{eq:mean-median-difference}
    |\langle f\rangle_Q-m_f(Q)|
    \leq
    C_n\|f\|_{BMO}.
\end{equation}
Therefore,
\[
    |f(x)-\langle f\rangle_Q|
    \leq
    |f(x)-m_f(Q)|
    +
    C_n\|f\|_{BMO}.
\]
Since \(Q\in\mathcal S\), we have
\[
    \chi_Q\leq \sum_{P\in\mathcal S}\chi_P.
\]
Therefore
\[
    |f(x)-\langle f\rangle_Q|\chi_Q(x)
    \leq
    C_n\|f\|_{BMO}
    \sum_{P\in\mathcal S}\chi_P(x),
\]
which proves \eqref{eq:sparse-domination-bmo}.
\end{proof}

Recall that for \(0<\eta<1\),
\begin{equation}\label{eq:section4-TX}
    T_{\mathbb X}(\eta)
    :=
    \sup_Q
    \sup_{\substack{\mathcal S\subset\mathcal D(Q)\\
    \mathcal S\ \eta\text{-sparse}}}
    \left\|
        \sum_{P\in\mathcal S}\chi_P
    \right\|_{X_Q}.
\end{equation}

The following theorem is the basic sparse-testing criterion.

\begin{theorem}\label{thm:sparse-testing-sufficiency}
Let \(\eta_0\) be the sparsity parameter from 
Lemma~\ref{lem:sparse-domination-bmo}. Assume that
\begin{equation}\label{eq:finite-sparse-testing}
    T_{\mathbb X}(\eta_0)<\infty .
\end{equation}
Then
\begin{equation}\label{eq:BMO-into-BMOX}
    BMO\hookrightarrow BMO_{\mathbb X}.
\end{equation}
More precisely, for every \(f\in BMO\),
\begin{equation}\label{eq:BMO-into-BMOX-estimate}
    \|f\|_{BMO_{\mathbb X}}
    \leq
    C_n T_{\mathbb X}(\eta_0)\|f\|_{BMO},
\end{equation}
where \(C_n\) is the constant in Lemma~\ref{lem:sparse-domination-bmo}.
\end{theorem}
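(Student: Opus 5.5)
The proof is a direct combination of Lemma~\ref{lem:sparse-domination-bmo} with the lattice property \eqref{eq:lattice-property} of each \(X_Q\). Fix \(f\in BMO\) and a cube \(Q\). If \(\|f\|_{BMO}=0\), then \(f\) is a.e.\ constant on \(Q\), so \((f-\langle f\rangle_Q)\chi_Q=0\) a.e.\ and there is nothing to prove. Otherwise, Lemma~\ref{lem:sparse-domination-bmo} supplies an \(\eta_0\)-sparse family \(\mathcal S\subset\mathcal D(Q)\) with
\[
    |f-\langle f\rangle_Q|\chi_Q
    \leq
    C_n\|f\|_{BMO}\sum_{P\in\mathcal S}\chi_P
\]
almost everywhere on \(Q\).

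The key step is to show that the right-hand side belongs to \(X_Q\), with the correct bound. By the definition \eqref{eq:TX-definition} and homogeneity of the quasi-norm,
\[
    \Bigl\|C_n\|f\|_{BMO}\sum_{P\in\mathcal S}\chi_P\Bigr\|_{X_Q}
    \leq
    C_n\|f\|_{BMO}\,T_{\mathbb X}(\eta_0)<\infty .
\]
One point needs care. Since \(\mathcal S\) may be infinite, the sum \(\sum_{P\in\mathcal S}\chi_P\) is a priori only a measurable function with values in \([0,\infty]\). I would enumerate \(\mathcal S\) and take finite partial sums \(s_N\uparrow\sum_{P}\chi_P\). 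Each finite subfamily of an \(\eta_0\)-sparse family is again \(\eta_0\)-sparse. The reason is that removing cubes can only change the \(\mathcal S\)-children of a cube \(P\) into a collection of cubes that are pairwise disjoint and contained in the union of the original children's descendants in \(\mathcal S\). The bound \eqref{eq:sparse-children-condition} for this new collection then follows by iterating Lemma~\ref{lem:sparse-generations}-style decay, since the original children of \(P\) already cover all deeper cubes of \(\mathcal S\) inside \(P\).

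Given this, \(\|s_N\|_{X_Q}\le T_{\mathbb X}(\eta_0)\) for every \(N\). The Fatou property \eqref{eq:fatou-property} then gives that \(\sum_P\chi_P\in X_Q\) with norm at most \(T_{\mathbb X}(\eta_0)\), and in particular that the sum is finite a.e. (Alternatively, the supremum in \eqref{eq:TX-definition} already ranges over all sparse families, including the infinite one, and the function is in \(X_Q\) by the finiteness of its norm; the Fatou argument is the safer route.)

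Once the majorant lies in \(X_Q\), the lattice property \eqref{eq:lattice-property} applied to the pointwise domination gives
\[
    (f-\langle f\rangle_Q)\chi_Q\in X_Q,
    \qquad
    \|(f-\langle f\rangle_Q)\chi_Q\|_{X_Q}\le C_nT_{\mathbb X}(\eta_0)\|f\|_{BMO}.
\]
Taking the supremum over all cubes \(Q\) yields \eqref{eq:BMO-into-BMOX-estimate}, and hence the continuous embedding \eqref{eq:BMO-into-BMOX}.

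I expect no real obstacle here. The only subtle point is the measure-theoretic step that places the possibly infinite sparse sum in \(X_Q\), which the Fatou argument above handles. No quasi-triangle constant or Assumption~\ref{ass:rho-subadditivity} enters, because the sum is controlled as a single function through \(T_{\mathbb X}\) rather than term by term.
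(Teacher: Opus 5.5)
Your proposal is correct and is essentially the paper's proof: apply the sparse domination of Lemma~\ref{lem:sparse-domination-bmo}, use the lattice property of $X_Q$ to pass the pointwise bound to the quasi-norm, control $\bigl\|\sum_{P\in\mathcal S}\chi_P\bigr\|_{X_Q}$ by $T_{\mathbb X}(\eta_0)$, and take the supremum over cubes. The paper skips your Fatou detour because the supremum in \eqref{eq:TX-definition} already ranges over infinite $\eta_0$-sparse families; your claim that subfamilies stay sparse is true but needs no iterated generation decay, since the $\mathcal S'$-children of $P$ are pairwise disjoint and each lies inside some $\mathcal S$-child of $P$.
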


\begin{proof}
Let \(f\in BMO\), and fix a cube \(Q\subset\mathbb R^n\). By 
Lemma~\ref{lem:sparse-domination-bmo}, there exists an \(\eta_0\)-sparse family 
\(\mathcal S\subset\mathcal D(Q)\) such that
\[
    |f-\langle f\rangle_Q|\chi_Q
    \leq
    C_n\|f\|_{BMO}
    \sum_{P\in\mathcal S}\chi_P
\]
almost everywhere on \(Q\). By the lattice property \eqref{eq:lattice-property},
\[
    \|(f-\langle f\rangle_Q)\chi_Q\|_{X_Q}
    \leq
    C_n\|f\|_{BMO}
    \left\|
        \sum_{P\in\mathcal S}\chi_P
    \right\|_{X_Q}.
\]
Using the definition of \(T_{\mathbb X}(\eta_0)\) in \eqref{eq:section4-TX}, we get
\[
    \|(f-\langle f\rangle_Q)\chi_Q\|_{X_Q}
    \leq
    C_n T_{\mathbb X}(\eta_0)\|f\|_{BMO}.
\]
Taking the supremum over all cubes \(Q\) and using 
\eqref{eq:BMOX-norm} gives \eqref{eq:BMO-into-BMOX-estimate}.
\end{proof}

\begin{remark}\label{rem:sparse-testing-not-automatic}
The finiteness of \(T_{\mathbb X}(\eta_0)\) is not automatic from the normalization 
\eqref{eq:normalization}. It is an additional compatibility condition between the 
local quasi-norms \(X_Q\) and sparse geometry. Thus it plays, for the embedding 
\(BMO\hookrightarrow BMO_{\mathbb X}\), the role played by the lower median-testing 
condition in Theorem~\ref{thm:median-testing-embedding}.
\end{remark}

The sparse testing condition can often be verified through the upper testing functional 
\(\Psi_{\mathbb X}\) defined in \eqref{eq:Psi-definition}. The following proposition 
is the main sufficient criterion used in the examples.

\begin{proposition}\label{prop:Psi-implies-TX}
Assume that \(\mathbb X\) satisfies Assumption~\ref{ass:rho-subadditivity}. Suppose that
\begin{equation}\label{eq:Psi-integral-condition}
    \int_0^1
    \frac{\Psi_{\mathbb X}(t)^\rho}{t}\,dt
    <\infty .
\end{equation}
Then, for every \(0<\eta<1\),
\begin{equation}\label{eq:TX-finite-from-Psi}
    T_{\mathbb X}(\eta)<\infty .
\end{equation}
Consequently,
\begin{equation}\label{eq:Psi-implies-BMO-embedding}
    BMO\hookrightarrow BMO_{\mathbb X}.
\end{equation}
\end{proposition}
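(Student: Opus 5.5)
Fix a cube \(Q\) and an \(\eta\)-sparse family \(\mathcal S\subset\mathcal D(Q)\). The plan is to decompose the sparse sum by generations, using Lemma~\ref{lem:sparse-generations}:
\[
    \sum_{P\in\mathcal S}\chi_P=\sum_{k\geq0}\sum_{P\in\mathcal S_k}\chi_P=\sum_{k\geq0}\chi_{\Omega_k}.
\]
The second identity holds because the cubes within each generation \(\mathcal S_k\) are pairwise disjoint. To make Assumption~\ref{ass:rho-subadditivity} applicable, which is stated only for finite sums, we first truncate: we work with \(\sum_{k=0}^{N}\chi_{\Omega_k}\) and with finite subfamilies, and we pass to the limit at the end using the Fatou property \eqref{eq:fatou-property}. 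Applying \eqref{eq:rho-triangle} then gives
\[
    \Bigl\|\sum_{k=0}^N\chi_{\Omega_k}\Bigr\|_{X_Q}^\rho\leq C_\rho\sum_{k=0}^N\|\chi_{\Omega_k}\|_{X_Q}^\rho .
\]

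Next we estimate each term through \(\Psi_{\mathbb X}\). By \eqref{eq:sparse-generation-decay} we have \(|\Omega_k|\leq(1-\eta)^k|Q|\). The definition of \(\Psi_{\mathbb X}\) together with Assumption~\ref{ass:normalization} then yields
\[
    \|\chi_{\Omega_k}\|_{X_Q}\leq C_0\,\Psi_{\mathbb X}\bigl((1-\eta)^k\bigr)\qquad (k\geq1).
\]
For \(k=0\) we use the lattice bound \(\|\chi_{\Omega_0}\|_{X_Q}\leq C_0\) directly, since \(\Psi\) is defined only for \(t<1\). This gives the bound
\[
    T_{\mathbb X}(\eta)^\rho\leq C_\rho C_0^\rho\Bigl(1+\sum_{k\geq1}\Psi_{\mathbb X}(\theta^k)^\rho\Bigr),\qquad \theta:=1-\eta,
\]
uniformly in \(Q\) and \(\mathcal S\).

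The main step is to compare this series with the integral in \eqref{eq:Psi-integral-condition}. The key observation is that \(\Psi_{\mathbb X}\) is nondecreasing in \(t\), because enlarging \(t\) enlarges the admissible class of sets \(E\). Hence, for \(t\in[\theta^{k},\theta^{k-1}]\) we have \(\Psi_{\mathbb X}(t)\geq\Psi_{\mathbb X}(\theta^k)\), and therefore
\[
    \int_{\theta^k}^{\theta^{k-1}}\frac{\Psi_{\mathbb X}(t)^\rho}{t}\,dt\geq\Psi_{\mathbb X}(\theta^k)^\rho\log(1/\theta).
\]
Summing over \(k\geq1\) gives
\[
    \sum_{k\geq1}\Psi_{\mathbb X}(\theta^k)^\rho\leq\frac{1}{\log(1/\theta)}\int_0^1\frac{\Psi_{\mathbb X}(t)^\rho}{t}\,dt<\infty .
\]
For \(k=1\) the interval is \([\theta,1)\), which is fine. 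This yields \eqref{eq:TX-finite-from-Psi}, with a bound depending on \(\eta\) only through \(\log(1/(1-\eta))\).

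The one delicate point is the limiting argument for infinite sparse families. We plan to handle it by applying the finite estimate to the increasing truncations \(\sum_{k\leq N}\sum_{P\in\mathcal S_k,\ \ell(P)\geq 2^{-M}\ell(Q)}\chi_P\). These are finite sums of characteristic functions, and their generation sets are still controlled by the \(\Omega_k\). The truncations increase pointwise to \(\sum_{P\in\mathcal S}\chi_P\), so the Fatou property transfers the uniform bound to the full sum, possibly with the value \(+\infty\) excluded by the bound itself. Finally, since \(T_{\mathbb X}(\eta)<\infty\) for every \(\eta\), in particular for \(\eta=\eta_0\), Theorem~\ref{thm:sparse-testing-sufficiency} gives \eqref{eq:Psi-implies-BMO-embedding}.
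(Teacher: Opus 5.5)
Your proposal is correct and follows essentially the same route as the paper. Both arguments use the decomposition of the sparse sum into the generation sets \(\Omega_k\), the uniform \(\rho\)-subadditivity bound with the \(k=0\) term handled by normalization and the \(k\geq1\) terms bounded by \(C_0\Psi_{\mathbb X}((1-\eta)^k)\), the comparison of \(\sum_k\Psi_{\mathbb X}(\theta^k)^\rho\) with the integral via monotonicity of \(\Psi_{\mathbb X}\), and finally Theorem~\ref{thm:sparse-testing-sufficiency} at \(\eta=\eta_0\). The only difference is that you spell out the Fatou truncation that the paper invokes in one line; truncating in \(k\) alone already suffices, since each \(\chi_{\Omega_k}\le\chi_Q\) lies in \(X_Q\) and \(\sum_{k\le N}\chi_{\Omega_k}\) is a finite sum increasing to \(\sum_{P\in\mathcal S}\chi_P\).
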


\begin{proof}
Fix \(0<\eta<1\), a cube \(Q\subset\mathbb R^n\), and an 
\(\eta\)-sparse family \(\mathcal S\subset\mathcal D(Q)\) in the sense of 
Definition~\ref{def:sparse-family}. Let \(\{\mathcal S_k\}_{k\geq0}\) be the 
generations of \(\mathcal S\) defined in Lemma~\ref{lem:sparse-generations}, and set
\[
    \Omega_k
    :=
    \bigcup_{P\in\mathcal S_k}P .
\]
By Lemma~\ref{lem:sparse-generations},
\begin{equation}\label{eq:Omega-decay-prop44}
    |\Omega_k|
    \leq
    (1-\eta)^k |Q|,
    \qquad k\geq0.
\end{equation}
Since the cubes in each generation are pairwise disjoint, we have
\begin{equation}\label{eq:sparse-sum-by-generations-prop44}
    \sum_{P\in\mathcal S}\chi_P
    =
    \sum_{k=0}^{\infty}\chi_{\Omega_k}.
\end{equation}

Using the Fatou property \eqref{eq:fatou-property} and the uniform 
\(\rho\)-subadditivity assumption \eqref{eq:rho-triangle}, we obtain
\begin{equation}\label{eq:rho-generation-bound-prop44}
    \left\|
        \sum_{P\in\mathcal S}\chi_P
    \right\|_{X_Q}^{\rho}
    \leq
    C_\rho
    \sum_{k=0}^{\infty}
    \|\chi_{\Omega_k}\|_{X_Q}^{\rho}.
\end{equation}

We estimate the term \(k=0\) separately. Since \(\Omega_0\subset Q\), the lattice 
property \eqref{eq:lattice-property} and the normalization 
\eqref{eq:normalization} give
\begin{equation}\label{eq:k0-bound-prop44}
    \|\chi_{\Omega_0}\|_{X_Q}
    \leq
    \|\chi_Q\|_{X_Q}
    \leq
    C_0 .
\end{equation}

Now let \(k\geq1\). By \eqref{eq:Omega-decay-prop44} and the definition of 
\(\Psi_{\mathbb X}\) in \eqref{eq:Psi-definition}, we have
\[
    \frac{\|\chi_{\Omega_k}\|_{X_Q}}{\|\chi_Q\|_{X_Q}}
    \leq
    \Psi_{\mathbb X}\bigl((1-\eta)^k\bigr).
\]
Using again the normalization \eqref{eq:normalization}, it follows that
\begin{equation}\label{eq:Omega-Psi-bound-prop44}
    \|\chi_{\Omega_k}\|_{X_Q}
    \leq
    C_0\,
    \Psi_{\mathbb X}\bigl((1-\eta)^k\bigr),
    \qquad k\geq1.
\end{equation}

Set \(r:=1-\eta\). Then \(0<r<1\). We claim that
\begin{equation}\label{eq:Psi-series-bound-prop44}
    \sum_{k=1}^{\infty}
    \Psi_{\mathbb X}(r^k)^\rho
    <\infty .
\end{equation}
Indeed, since \(\Psi_{\mathbb X}\) is nondecreasing, for 
\(t\in(r^{k+1},r^k]\) we have
\[
    \Psi_{\mathbb X}(r^{k+1})^\rho
    \leq
    \Psi_{\mathbb X}(t)^\rho .
\]
Therefore,
\[
    \Psi_{\mathbb X}(r^{k+1})^\rho
    \log\frac1r
    \leq
    \int_{r^{k+1}}^{r^k}
    \frac{\Psi_{\mathbb X}(t)^\rho}{t}\,dt .
\]
Summing over \(k\geq0\) and using the assumption
\[
    \int_0^1
    \frac{\Psi_{\mathbb X}(t)^\rho}{t}\,dt
    <\infty
\]
gives \eqref{eq:Psi-series-bound-prop44}.

Combining \eqref{eq:rho-generation-bound-prop44}, \eqref{eq:k0-bound-prop44}, 
\eqref{eq:Omega-Psi-bound-prop44}, and \eqref{eq:Psi-series-bound-prop44}, we obtain
\[
    \left\|
        \sum_{P\in\mathcal S}\chi_P
    \right\|_{X_Q}^{\rho}
    \leq
    C_\rho C_0^\rho
    \left(
        1+
        \sum_{k=1}^{\infty}
        \Psi_{\mathbb X}(r^k)^\rho
    \right)
    <\infty .
\]
The right-hand side depends only on \(\eta\), \(\rho\), \(C_\rho\), \(C_0\), and the 
integral in \eqref{eq:Psi-integral-condition}; it is independent of \(Q\) and 
\(\mathcal S\). Hence there exists a constant \(C_{\eta,\rho,\mathbb X}<\infty\) such 
that
\[
    \left\|
        \sum_{P\in\mathcal S}\chi_P
    \right\|_{X_Q}
    \leq
    C_{\eta,\rho,\mathbb X}.
\]
Taking the supremum over all cubes \(Q\) and all \(\eta\)-sparse families 
\(\mathcal S\subset\mathcal D(Q)\) proves
\[
    T_{\mathbb X}(\eta)<\infty .
\]

Finally, applying this conclusion with \(\eta=\eta_0\), where \(\eta_0\) is the 
sparsity parameter from Lemma~\ref{lem:sparse-domination-bmo}, and then using 
Theorem~\ref{thm:sparse-testing-sufficiency}, we obtain
\[
    BMO\hookrightarrow BMO_{\mathbb X}.
\]
This completes the proof.
\end{proof}

\begin{remark}\label{rem:Banach-Psi-condition}
If all \(X_Q\) are Banach function spaces, then one may take \(\rho=1\) in 
Assumption~\ref{ass:rho-subadditivity}. In this case 
\eqref{eq:Psi-integral-condition} becomes
\[
    \int_0^1
    \frac{\Psi_{\mathbb X}(t)}{t}\,dt
    <\infty.
\]
For genuinely quasi-Banach families, the exponent \(\rho\) is needed.
\end{remark}

We record several immediate examples of the sparse-testing criterion.

\begin{proposition}[Normalized \(L^p\) spaces]
\label{prop:Lp-sparse-testing}
Let \(1\leq p<\infty\), and let \(X_Q\) be defined by 
\eqref{eq:normalized-Lp}. Then
\begin{equation}\label{eq:Lp-TX-finite}
    T_{\mathbb X}(\eta)<\infty
\end{equation}
for every \(0<\eta<1\). Consequently,
\begin{equation}\label{eq:Lp-BMO-embedding-section4}
    BMO\hookrightarrow BMO_{\mathbb X}.
\end{equation}
\end{proposition}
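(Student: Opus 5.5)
The plan is to deduce the statement from Proposition~\ref{prop:Psi-implies-TX}, with \(\rho=1\). First, the normalized \(L^p\) quasi-norm \eqref{eq:normalized-Lp} is a genuine norm for \(1\leq p<\infty\): it is \(|Q|^{-1/p}\) times the usual \(L^p(Q)\) norm, so Minkowski's inequality holds with constant one. Hence Assumption~\ref{ass:rho-subadditivity} is satisfied with \(\rho=1\) and \(C_\rho=1\), uniformly in \(Q\). This matches Remark~\ref{rem:Banach-Psi-condition}. Lattice and Fatou properties follow from monotone convergence, and Assumption~\ref{ass:normalization} holds with \(c_0=C_0=1\), as noted in Example~\ref{ex:normalized-Lp}.

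Second, I will compute \(\Psi_{\mathbb X}\). For measurable \(E\subset Q\) we have \(\|\chi_E\|_{X_Q}=(|E|/|Q|)^{1/p}\). Taking the supremum over \(|E|\leq t|Q|\) gives \(\Psi_{\mathbb X}(t)=t^{1/p}\), which is \eqref{eq:Psi-Lp}. Then
\[
    \int_0^1\frac{\Psi_{\mathbb X}(t)}{t}\,dt=\int_0^1 t^{1/p-1}\,dt=p<\infty,
\]
since \(1/p>0\). Proposition~\ref{prop:Psi-implies-TX} now yields \(T_{\mathbb X}(\eta)<\infty\) for every \(0<\eta<1\). Applying this with \(\eta=\eta_0\) in Theorem~\ref{thm:sparse-testing-sufficiency} gives \eqref{eq:Lp-BMO-embedding-section4}.

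As a cross-check, one could bypass \(\Psi_{\mathbb X}\) and estimate directly. Split the sparse sum by generations as in Lemma~\ref{lem:sparse-generations} and apply Minkowski's inequality:
\[
    \Bigl\|\sum_{P\in\mathcal S}\chi_P\Bigr\|_{X_Q}\leq\sum_{k\geq0}\Bigl(\frac{|\Omega_k|}{|Q|}\Bigr)^{1/p}\leq\sum_{k\geq0}(1-\eta)^{k/p}=\frac{1}{1-(1-\eta)^{1/p}}.
\]
This gives an explicit bound for \(T_{\mathbb X}(\eta)\).

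There is no real obstacle. The only point requiring care is that the series identity \eqref{eq:sparse-sum-by-generations-prop44} involves infinitely many terms. Passing the norm through that infinite sum is justified by the Fatou property applied to the partial sums, exactly as in the proof of Proposition~\ref{prop:Psi-implies-TX}. Here that property is simply monotone convergence in \(L^p\).
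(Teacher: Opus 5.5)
Your proposal is correct and follows essentially the same route as the paper. Both compute \(\Psi_{\mathbb X}(t)=t^{1/p}\), note that \(\int_0^1 t^{1/p-1}\,dt<\infty\), invoke Proposition~\ref{prop:Psi-implies-TX} with \(\rho=1\) to get \(T_{\mathbb X}(\eta)<\infty\) for every \(\eta\), and conclude via Theorem~\ref{thm:sparse-testing-sufficiency} at \(\eta=\eta_0\); your explicit check of Assumption~\ref{ass:rho-subadditivity} and your direct bound \(T_{\mathbb X}(\eta)\leq\bigl(1-(1-\eta)^{1/p}\bigr)^{-1}\) are correct additions that the paper leaves implicit.
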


\begin{proof}
By \eqref{eq:Psi-Lp},
\[
    \Psi_{\mathbb X}(t)=t^{1/p},
    \qquad 0<t<1.
\]
Since
\[
    \int_0^1
    \frac{\Psi_{\mathbb X}(t)}{t}\,dt
    =
    \int_0^1 t^{1/p-1}\,dt
    <\infty,
\]
Proposition~\ref{prop:Psi-implies-TX} gives 
\eqref{eq:Lp-TX-finite}. The embedding 
\eqref{eq:Lp-BMO-embedding-section4} follows from 
Theorem~\ref{thm:sparse-testing-sufficiency}.
\end{proof}

\begin{proposition}[Weighted \(L^1\) spaces]
\label{prop:weighted-sparse-testing}
Let \(w\) be a locally integrable weight with \(0<w(Q)<\infty\) for every cube \(Q\), 
and let \(X_Q\) be defined by \eqref{eq:weighted-L1-normalized}. If
\begin{equation}\label{eq:weighted-Psi-integral}
    \int_0^1
    \left(
    \sup_Q
    \sup_{\substack{E\subset Q\\ |E|\leq t|Q|}}
    \frac{w(E)}{w(Q)}
    \right)
    \frac{dt}{t}
    <\infty,
\end{equation}
then
\begin{equation}\label{eq:weighted-BMO-embedding-section4}
    BMO\hookrightarrow BMO_{\mathbb X}.
\end{equation}
\end{proposition}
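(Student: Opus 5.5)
The plan is to reduce the statement to Proposition~\ref{prop:Psi-implies-TX} with \(\rho=1\), following the pattern of Proposition~\ref{prop:Lp-sparse-testing}. The first step is to check that the weighted family in \eqref{eq:weighted-L1-normalized} meets the structural hypotheses. Each \(X_Q\) is the normalized weighted Lebesgue space \(L^1(Q,w\,dx/w(Q))\). It is a Banach function space over \((Q,dx)\): the lattice property is monotonicity of the integral, the Fatou property is monotone convergence, and the norm is subadditive. So Assumption~\ref{ass:rho-subadditivity} holds with \(\rho=1\) and \(C_\rho=1\), as in Remark~\ref{rem:rho-subadditivity}. Assumption~\ref{ass:normalization} holds with \(c_0=C_0=1\) because \(\|\chi_Q\|_{X_Q}=1\).

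The second step is to identify \(\Psi_{\mathbb X}\). By \eqref{eq:weighted-characteristic} and \(\|\chi_Q\|_{X_Q}=1\), definition \eqref{eq:Psi-definition} gives
\[
    \Psi_{\mathbb X}(t)
    =
    \sup_Q
    \sup_{\substack{E\subset Q\\ |E|\leq t|Q|}}
    \frac{w(E)}{w(Q)} .
\]
Hence hypothesis \eqref{eq:weighted-Psi-integral} is exactly \eqref{eq:Psi-integral-condition} with \(\rho=1\).

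The third step applies Proposition~\ref{prop:Psi-implies-TX}, which gives \(T_{\mathbb X}(\eta)<\infty\) for every \(0<\eta<1\). Taking \(\eta=\eta_0\) from Lemma~\ref{lem:sparse-domination-bmo}, Theorem~\ref{thm:sparse-testing-sufficiency} then yields \eqref{eq:weighted-BMO-embedding-section4}, with \(\|f\|_{BMO_{\mathbb X}}\le C_n T_{\mathbb X}(\eta_0)\|f\|_{BMO}\).

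I do not expect a real obstacle, since the argument is a verification. The one point needing care is measurability. The monotonicity of \(\Psi_{\mathbb X}\), which the proof of Proposition~\ref{prop:Psi-implies-TX} already relies on, makes the integrand in \eqref{eq:weighted-Psi-integral} monotone and hence measurable, so the integral is well defined. I would also remark that \(\Psi_{\mathbb X}(t)\le 1\), so finiteness of the integral is really a condition near \(t=0\): it is a quantitative \(A_\infty\)-type decay \(w(E)/w(Q)\to0\) as \(|E|/|Q|\to0\). This is not needed for the proof.
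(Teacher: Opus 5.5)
Your proposal is correct and is essentially the paper's proof. The paper likewise uses \eqref{eq:weighted-characteristic} to identify $\Psi_{\mathbb X}(t)$ with the weighted supremum, notes that \eqref{eq:weighted-Psi-integral} is exactly \eqref{eq:Psi-integral-condition} with $\rho=1$, and concludes via Proposition~\ref{prop:Psi-implies-TX} and Theorem~\ref{thm:sparse-testing-sufficiency}; your checks that $\rho=1$, $C_\rho=1$, $c_0=C_0=1$ and of measurability make explicit what the paper leaves implicit.
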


\begin{proof}
For the normalized weighted space, \eqref{eq:weighted-characteristic} gives
\[
    \frac{\|\chi_E\|_{X_Q}}{\|\chi_Q\|_{X_Q}}
    =
    \frac{w(E)}{w(Q)}.
\]
Thus \eqref{eq:weighted-Psi-integral} is exactly 
\eqref{eq:Psi-integral-condition} with \(\rho=1\). Hence 
Proposition~\ref{prop:Psi-implies-TX} yields finite sparse testing, and 
Theorem~\ref{thm:sparse-testing-sufficiency} gives 
\eqref{eq:weighted-BMO-embedding-section4}.
\end{proof}

\begin{proposition}[Localized Banach function spaces]
\label{prop:localized-BFS-sparse-testing}
Let \(X\) be a Banach function space on \(\mathbb R^n\) satisfying 
\(0<\|\chi_Q\|_X<\infty\) for every cube \(Q\), and define \(X_Q\) by 
\eqref{eq:localized-global-space}. Suppose that
\begin{equation}\label{eq:localized-Psi-integral}
    \int_0^1
    \left(
    \sup_Q
    \sup_{\substack{E\subset Q\\ |E|\leq t|Q|}}
    \frac{\|\chi_E\|_X}{\|\chi_Q\|_X}
    \right)
    \frac{dt}{t}
    <\infty .
\end{equation}
Then
\begin{equation}\label{eq:localized-BMO-embedding-section4}
    BMO\hookrightarrow BMO_{\mathbb X}.
\end{equation}
\end{proposition}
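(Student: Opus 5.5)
The plan is to reduce the statement to Proposition~\ref{prop:Psi-implies-TX} with \(\rho=1\), exactly as in the weighted case of Proposition~\ref{prop:weighted-sparse-testing}. The first step is to identify the upper testing functional of the localized family. By the definition \eqref{eq:localized-global-space}, for every cube \(Q\) and every measurable \(E\subset Q\) we have \(\chi_E\chi_Q=\chi_E\), hence
\[
    \frac{\|\chi_E\|_{X_Q}}{\|\chi_Q\|_{X_Q}}
    =
    \frac{\|\chi_E\|_X/\|\chi_Q\|_X}{\|\chi_Q\|_X/\|\chi_Q\|_X}
    =
    \frac{\|\chi_E\|_X}{\|\chi_Q\|_X}.
\]
Taking the suprema over \(E\subset Q\) with \(|E|\leq t|Q|\) and over all cubes \(Q\) shows that \(\Psi_{\mathbb X}(t)\) coincides with the integrand in \eqref{eq:localized-Psi-integral}. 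Therefore \eqref{eq:localized-Psi-integral} is precisely \eqref{eq:Psi-integral-condition} with \(\rho=1\).

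The second step is to check the structural hypotheses required by Proposition~\ref{prop:Psi-implies-TX}. Each \(X_Q\) is a Banach function space over \((Q,dx)\). The lattice property and the Fatou property for \(X_Q\) are inherited from \(X\), because \(\|\cdot\|_{X_Q}\) is a positive multiple of \(\|\cdot\,\chi_Q\|_X\) and the operation \(g\mapsto g\chi_Q\) preserves pointwise order and monotone convergence. The triangle inequality in \(X\) gives the norm property of \(X_Q\). Consequently Assumption~\ref{ass:rho-subadditivity} holds with \(\rho=1\) and \(C_\rho=1\), as noted in Remark~\ref{rem:rho-subadditivity}. The normalization in Assumption~\ref{ass:normalization} holds with \(c_0=C_0=1\), since \(\|\chi_Q\|_{X_Q}=1\).

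The final step is to apply Proposition~\ref{prop:Psi-implies-TX}. It gives \(T_{\mathbb X}(\eta)<\infty\) for every \(0<\eta<1\), in particular for \(\eta=\eta_0\), and Theorem~\ref{thm:sparse-testing-sufficiency} then yields \eqref{eq:localized-BMO-embedding-section4}. There is no genuine obstacle here. The only point requiring care is the verification that the localized quasi-norms inherit the Fatou property, because this property is used in Proposition~\ref{prop:Psi-implies-TX} to pass from finite sums to the infinite generation sum. This follows directly from the Fatou property of \(X\) applied to \(f_j\chi_Q\uparrow f\chi_Q\).
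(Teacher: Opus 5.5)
Your proposal is correct and follows the paper's proof: you identify $\|\chi_E\|_{X_Q}/\|\chi_Q\|_{X_Q}$ with $\|\chi_E\|_X/\|\chi_Q\|_X$, so that \eqref{eq:localized-Psi-integral} becomes \eqref{eq:Psi-integral-condition} with $\rho=1$, and then invoke Proposition~\ref{prop:Psi-implies-TX} and Theorem~\ref{thm:sparse-testing-sufficiency}. You also check that each $X_Q$ inherits the lattice, Fatou, and norm properties from $X$, with $c_0=C_0=1$; the paper leaves this implicit, and your verification is correct.
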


\begin{proof}
By \eqref{eq:localized-global-space},
\[
    \frac{\|\chi_E\|_{X_Q}}{\|\chi_Q\|_{X_Q}}
    =
    \frac{\|\chi_E\|_X}{\|\chi_Q\|_X}.
\]
Therefore \eqref{eq:localized-Psi-integral} is precisely 
\eqref{eq:Psi-integral-condition} with \(\rho=1\). The conclusion follows from 
Proposition~\ref{prop:Psi-implies-TX} and 
Theorem~\ref{thm:sparse-testing-sufficiency}.
\end{proof}

The lower testing functional \(\Lambda_{\mathbb X}\) and the sparse testing seminorm 
\(T_{\mathbb X}\) control different embeddings. By 
Theorem~\ref{thm:median-testing-embedding}, the condition
\begin{equation}\label{eq:lower-testing-condition-section4}
    \Lambda_{\mathbb X}(\lambda)>0
    \qquad
    \text{for some }0<\lambda<\frac12
\end{equation}
implies
\begin{equation}\label{eq:lower-testing-controls}
    BMO_{\mathbb X}^{*}\cap L^1_{\mathrm{loc}}(\mathbb R^n)
    \hookrightarrow
    BMO.
\end{equation}
By Theorem~\ref{thm:sparse-testing-sufficiency}, the condition
\begin{equation}\label{eq:sparse-testing-condition-section4}
    T_{\mathbb X}(\eta_0)<\infty,
\end{equation}
where \(\eta_0\) is the sparsity parameter in 
Lemma~\ref{lem:sparse-domination-bmo}, implies
\begin{equation}\label{eq:sparse-testing-controls}
    BMO\hookrightarrow BMO_{\mathbb X}.
\end{equation}
Thus, under the simultaneous assumptions 
\eqref{eq:lower-testing-condition-section4} and 
\eqref{eq:sparse-testing-condition-section4}, we obtain
\begin{equation}\label{eq:two-sided-comparison}
    BMO_{\mathbb X}^{*}\cap L^1_{\mathrm{loc}}(\mathbb R^n)
    \hookrightarrow
    BMO
    \hookrightarrow
    BMO_{\mathbb X}.
\end{equation}

Furthermore, whenever \(f\in BMO_{\mathbb X}\), the definitions 
\eqref{eq:BMOX-norm} and \eqref{eq:BMOX-star-norm} give
\begin{equation}\label{eq:BMOX-to-BMOXstar}
    \|f\|_{BMO_{\mathbb X}^{*}}
    \leq
    \|f\|_{BMO_{\mathbb X}},
\end{equation}
because the infimum over constants in \eqref{eq:BMOX-star-norm} is bounded above by 
the particular choice \(c=\langle f\rangle_Q\).

\begin{remark}\label{rem:no-general-necessity}
The results of this section are sufficient criteria. They should not be read as a full 
characterization in complete generality. Necessity of 
\(T_{\mathbb X}(\eta)<\infty\) for the embedding 
\(BMO\hookrightarrow BMO_{\mathbb X}\) would require an additional realization 
principle showing that arbitrary sparse sums can be represented from below by 
oscillations of uniformly bounded \(BMO\) functions in a way compatible with the 
local norms \(X_Q\). Such a principle is not automatic for an arbitrary normalized 
family \(\mathbb X\).
\end{remark}

\section{Model function spaces and testing criteria}\label{sec:examples}

In this section we illustrate the two testing mechanisms developed in
Section~\ref{sec:median-testing} and Section~\ref{sec:sparse-testing}. The lower
testing functional \(\Lambda_{\mathbb X}\) controls the embedding
\[
    BMO_{\mathbb X}^{*}\cap L^1_{\mathrm{loc}}(\mathbb R^n)
    \hookrightarrow BMO,
\]
whereas the sparse testing seminorm \(T_{\mathbb X}\), or the upper testing functional
\(\Psi_{\mathbb X}\), controls the embedding
\[
    BMO\hookrightarrow BMO_{\mathbb X}.
\]
The purpose of this section is not to claim a universal characterization, but to show
how the abstract criteria reduce to concrete and verifiable conditions in standard
model classes.

Throughout this section, \(\mathbb X=\{X_Q\}_{Q\subset\mathbb R^n}\) denotes a
normalized family satisfying Assumption~\ref{ass:normalization}. Whenever the small-set
criterion from Proposition~\ref{prop:Psi-implies-TX} is used, we also assume the
uniform \(\rho\)-subadditivity condition in Assumption~\ref{ass:rho-subadditivity}.

We first record the direct consequence of
Theorem~\ref{thm:median-testing-embedding} and
Theorem~\ref{thm:sparse-testing-sufficiency}.

\begin{proposition}\label{prop:two-sided-testing-consequence}
Let \(\eta_0\) be the sparsity parameter from
Lemma~\ref{lem:sparse-domination-bmo}. Assume that there exists \(0<\lambda<1/2\)
such that
\begin{equation}\label{eq:section5-lower-testing-assumption}
    \Lambda_{\mathbb X}(\lambda)>0,
\end{equation}
and assume that
\begin{equation}\label{eq:section5-sparse-testing-assumption}
    T_{\mathbb X}(\eta_0)<\infty.
\end{equation}
Then
\begin{equation}\label{eq:section5-two-sided-chain}
    BMO_{\mathbb X}^{*}\cap L^1_{\mathrm{loc}}(\mathbb R^n)
    \hookrightarrow
    BMO
    \hookrightarrow
    BMO_{\mathbb X}.
\end{equation}
Moreover, for every \(f\in BMO_{\mathbb X}\),
\begin{equation}\label{eq:BMOXstar-controlled-by-BMOX-section5}
    \|f\|_{BMO_{\mathbb X}^{*}}
    \leq
    \|f\|_{BMO_{\mathbb X}}.
\end{equation}
Consequently,
\begin{equation}\label{eq:section5-equivalence-common-class}
    BMO
    =
    BMO_{\mathbb X}
    =
    BMO_{\mathbb X}^{*}\cap L^1_{\mathrm{loc}}(\mathbb R^n)
\end{equation}
with equivalence of seminorms.
\end{proposition}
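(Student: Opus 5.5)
The plan is to assemble the chain from the two main theorems and then close it into a cycle using the trivial comparison \eqref{eq:BMOX-to-BMOXstar}.

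\emph{Step 1: the two embeddings.} By Theorem~\ref{thm:median-testing-embedding} applied with the given \(\lambda\in(0,1/2)\), every \(f\in BMO_{\mathbb X}^{*}\cap L^1_{\mathrm{loc}}\) satisfies
\[
    \|f\|_{BMO}\leq \frac{C_{n,\lambda}}{c_0\Lambda_{\mathbb X}(\lambda)}\|f\|_{BMO_{\mathbb X}^{*}}.
\]
By Theorem~\ref{thm:sparse-testing-sufficiency} with \(\eta_0\), every \(f\in BMO\) satisfies
\[
    \|f\|_{BMO_{\mathbb X}}\leq C_nT_{\mathbb X}(\eta_0)\|f\|_{BMO}.
\]
Together these give the chain \eqref{eq:section5-two-sided-chain}.

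\emph{Step 2: the comparison of the two \(\mathbb X\)-seminorms.} For \(f\in BMO_{\mathbb X}\) (which is locally integrable by definition), choose \(c=\langle f\rangle_Q\) in the infimum of \eqref{eq:BMOX-star-norm} for each cube \(Q\). This gives \eqref{eq:BMOXstar-controlled-by-BMOX-section5}. In particular
\[
    BMO_{\mathbb X}\subset BMO_{\mathbb X}^{*}\cap L^1_{\mathrm{loc}}(\mathbb R^n).
\]

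\emph{Step 3: closing the cycle.} The inclusions now run
\[
    BMO_{\mathbb X}^{*}\cap L^1_{\mathrm{loc}}\subset BMO\subset BMO_{\mathbb X}\subset BMO_{\mathbb X}^{*}\cap L^1_{\mathrm{loc}},
\]
so all three sets coincide. For the seminorms, take \(f\) in the common class. Then
\[
    \|f\|_{BMO}\lesssim\|f\|_{BMO_{\mathbb X}^{*}}\leq\|f\|_{BMO_{\mathbb X}}\lesssim\|f\|_{BMO},
\]
and each of the three seminorms is therefore comparable to the others. The implicit constants depend only on \(n\), \(\lambda\), \(c_0\), \(\Lambda_{\mathbb X}(\lambda)\) and \(T_{\mathbb X}(\eta_0)\).

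\emph{Main obstacle.} The only delicate point is bookkeeping about where each seminorm is defined and finite. The space \(BMO_{\mathbb X}^{*}\) may contain functions that are not locally integrable, which is why the statement intersects with \(L^1_{\mathrm{loc}}\). I also need \(BMO\subset L^1_{\mathrm{loc}}\), so that Step 1 lands in \(BMO_{\mathbb X}\), which requires averages. I must also check that the comparison in Step 2 is applied only to functions already known to be in \(BMO_{\mathbb X}\); this is guaranteed by the middle embedding. Beyond this, no new analysis is needed.
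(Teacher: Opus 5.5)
Your proposal is correct and follows the paper's proof step for step. The chain comes directly from Theorem~\ref{thm:median-testing-embedding} and Theorem~\ref{thm:sparse-testing-sufficiency}, the comparison \eqref{eq:BMOXstar-controlled-by-BMOX-section5} comes from choosing \(c=\langle f\rangle_Q\), and the equivalence follows by closing the cycle of inclusions. Your extra bookkeeping (that \(BMO\subset L^1_{\mathrm{loc}}(\mathbb R^n)\) and that \(BMO_{\mathbb X}\subset BMO_{\mathbb X}^{*}\cap L^1_{\mathrm{loc}}(\mathbb R^n)\)) only makes explicit what the paper leaves implicit.
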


\begin{proof}
The first embedding in \eqref{eq:section5-two-sided-chain} follows from
Theorem~\ref{thm:median-testing-embedding}. The second embedding follows from
Theorem~\ref{thm:sparse-testing-sufficiency}. Finally, if
\(f\in BMO_{\mathbb X}\), then for each cube \(Q\),
\[
    \inf_{c\in\mathbb R}
    \|(f-c)\chi_Q\|_{X_Q}
    \leq
    \|(f-\langle f\rangle_Q)\chi_Q\|_{X_Q}.
\]
Taking the supremum over all cubes \(Q\) gives
\eqref{eq:BMOXstar-controlled-by-BMOX-section5}. Combining this estimate with the two
embeddings in \eqref{eq:section5-two-sided-chain} proves
\eqref{eq:section5-equivalence-common-class}.
\end{proof}

The simplest model is the normalized local \(L^p\)-scale.

\begin{proposition}\label{prop:Lp-full-equivalence}
Let \(1\leq p<\infty\), and define
\begin{equation}\label{eq:section5-Lp-local-norm}
    \|g\|_{X_Q}
    :=
    \left(
        \frac{1}{|Q|}
        \int_Q |g(x)|^p\,dx
    \right)^{1/p}.
\end{equation}
Then, for every \(0<t<1\),
\begin{equation}\label{eq:section5-Lambda-Psi-Lp}
    \Lambda_{\mathbb X}(t)=t^{1/p},
    \qquad
    \Psi_{\mathbb X}(t)=t^{1/p}.
\end{equation}
Consequently,
\begin{equation}\label{eq:section5-Lp-equivalence}
    BMO
    =
    BMO_{\mathbb X}
    =
    BMO_{\mathbb X}^{*}\cap L^1_{\mathrm{loc}}(\mathbb R^n)
\end{equation}
with equivalence of seminorms.
\end{proposition}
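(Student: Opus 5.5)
The plan is to first compute the two testing functionals explicitly, and then feed them into Proposition~\ref{prop:two-sided-testing-consequence}. For a cube \(Q\) and a measurable \(E\subset Q\), the definition \eqref{eq:section5-Lp-local-norm} gives
\[
    \|\chi_E\|_{X_Q}=\left(\frac{|E|}{|Q|}\right)^{1/p},
    \qquad
    \|\chi_Q\|_{X_Q}=1 .
\]
In particular, Assumption~\ref{ass:normalization} holds with \(c_0=C_0=1\).

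The ratio \(\|\chi_E\|_{X_Q}/\|\chi_Q\|_{X_Q}\) is the increasing function \((|E|/|Q|)^{1/p}\) of the relative measure. The two extremal values are therefore attained at the boundary constraint.
\begin{itemize}
    \item For the lower functional, take \(|E|=t|Q|\): the infimum over \(|E|\geq t|Q|\) equals \(t^{1/p}\).
    \item For the upper functional, again take \(|E|=t|Q|\): the supremum over \(|E|\leq t|Q|\) equals \(t^{1/p}\).
\end{itemize}
A set with \(|E|=t|Q|\) exists in every cube \(Q\), for instance a subrectangle, so both extrema are attained. This proves \eqref{eq:section5-Lambda-Psi-Lp}, consistent with Proposition~\ref{prop:Lp-example-median} and \eqref{eq:Psi-Lp}.

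Next I verify the two hypotheses of Proposition~\ref{prop:two-sided-testing-consequence}.
\begin{itemize}
    \item \emph{Lower testing.} Pick any \(\lambda\in(0,1/2)\). Then \(\Lambda_{\mathbb X}(\lambda)=\lambda^{1/p}>0\).
    \item \emph{Sparse testing.} Since \(p\geq1\), each \(X_Q\) is a Banach function space, so Assumption~\ref{ass:rho-subadditivity} holds with \(\rho=1\) (Remark~\ref{rem:rho-subadditivity}). Minkowski's inequality gives \(C_\rho=1\). The integral condition \eqref{eq:Psi-integral-condition} becomes \(\int_0^1 t^{1/p-1}\,dt=p<\infty\). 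Proposition~\ref{prop:Psi-implies-TX}, or directly Proposition~\ref{prop:Lp-sparse-testing}, then yields \(T_{\mathbb X}(\eta_0)<\infty\).
\end{itemize}
With both hypotheses in place, Proposition~\ref{prop:two-sided-testing-consequence} gives the chain \eqref{eq:section5-two-sided-chain} together with \eqref{eq:BMOXstar-controlled-by-BMOX-section5}.

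It remains to close the loop into the set equality \eqref{eq:section5-Lp-equivalence}, and I would do this explicitly. If \(f\in BMO\), then \(f\in BMO_{\mathbb X}\) by the sparse embedding. Since \(f\in L^1_{\mathrm{loc}}\), we also get \(f\in BMO^*_{\mathbb X}\cap L^1_{\mathrm{loc}}\) by the trivial inequality. Conversely, if \(f\in BMO^*_{\mathbb X}\cap L^1_{\mathrm{loc}}\), then \(f\in BMO\) by the median embedding, and hence \(f\in BMO_{\mathbb X}\). Chaining the seminorm estimates gives
\[
    \|f\|_{BMO}\lesssim\|f\|_{BMO^*_{\mathbb X}}\leq\|f\|_{BMO_{\mathbb X}}\lesssim\|f\|_{BMO},
\]
with constants depending only on \(n\) and \(p\). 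For \(f\in BMO_{\mathbb X}\) the middle inequality applies, so all three sets coincide with equivalent seminorms.

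I do not expect a genuine obstacle, since everything reduces to earlier results. The only point needing care is the bookkeeping in this last step: the membership statements must be chained in the right order. The inequality \(\|f\|_{BMO^*_{\mathbb X}}\leq\|f\|_{BMO_{\mathbb X}}\) only makes sense once \(f\in L^1_{\mathrm{loc}}\), and membership in \(BMO_{\mathbb X}\) is obtained through \(BMO\).
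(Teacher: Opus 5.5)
Your proposal is correct and follows essentially the same route as the paper. The paper also computes \(\|\chi_E\|_{X_Q}=(|E|/|Q|)^{1/p}\) to obtain both functionals, then closes the loop with Theorem~\ref{thm:median-testing-embedding}, Proposition~\ref{prop:Psi-implies-TX} together with Theorem~\ref{thm:sparse-testing-sufficiency}, and the trivial bound \eqref{eq:BMOXstar-controlled-by-BMOX-section5}; packaging these through Proposition~\ref{prop:two-sided-testing-consequence} and noting that sets with \(|E|=t|Q|\) exist and how the memberships chain only makes explicit what the paper leaves implicit.
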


\begin{proof}
For every measurable set \(E\subset Q\),
\[
    \|\chi_E\|_{X_Q}
    =
    \left(\frac{|E|}{|Q|}\right)^{1/p}.
\]
This proves \eqref{eq:section5-Lambda-Psi-Lp}. Hence
\(\Lambda_{\mathbb X}(\lambda)>0\) for every \(0<\lambda<1\), and
Theorem~\ref{thm:median-testing-embedding} gives
\[
    BMO_{\mathbb X}^{*}\cap L^1_{\mathrm{loc}}(\mathbb R^n)
    \hookrightarrow BMO.
\]
Moreover,
\[
    \int_0^1
    \frac{\Psi_{\mathbb X}(t)}{t}\,dt
    =
    \int_0^1 t^{1/p-1}\,dt
    <\infty.
\]
Thus Proposition~\ref{prop:Psi-implies-TX} and
Theorem~\ref{thm:sparse-testing-sufficiency} imply
\[
    BMO\hookrightarrow BMO_{\mathbb X}.
\]
The reverse comparison from \(BMO_{\mathbb X}\) to \(BMO_{\mathbb X}^{*}\) follows from
\eqref{eq:BMOXstar-controlled-by-BMOX-section5}. Therefore
\eqref{eq:section5-Lp-equivalence} follows.
\end{proof}

Let \(w\) be a locally integrable weight with \(0<w(Q)<\infty\) for every cube \(Q\),
and define
\begin{equation}\label{eq:section5-weighted-local-norm}
    \|g\|_{X_Q}
    :=
    \frac{1}{w(Q)}
    \int_Q |g(x)|w(x)\,dx .
\end{equation}
Then \(\|\chi_Q\|_{X_Q}=1\), and for every measurable \(E\subset Q\),
\begin{equation}\label{eq:section5-weighted-characteristic}
    \|\chi_E\|_{X_Q}
    =
    \frac{w(E)}{w(Q)}.
\end{equation}

\begin{proposition}\label{prop:weighted-testing-criteria}
For the weighted model in \eqref{eq:section5-weighted-local-norm}, one has
\begin{equation}\label{eq:section5-weighted-Lambda}
    \Lambda_{\mathbb X}(t)
    =
    \inf_Q
    \inf_{\substack{E\subset Q\\ |E|\geq t|Q|}}
    \frac{w(E)}{w(Q)}
\end{equation}
and
\begin{equation}\label{eq:section5-weighted-Psi}
    \Psi_{\mathbb X}(t)
    =
    \sup_Q
    \sup_{\substack{E\subset Q\\ |E|\leq t|Q|}}
    \frac{w(E)}{w(Q)} .
\end{equation}
Consequently, if there exist constants \(0<\lambda<1/2\), \(c>0\),
\(\alpha>0\), and \(C>0\) such that
\begin{equation}\label{eq:section5-weighted-lower}
    \frac{w(E)}{w(Q)}
    \geq c
\end{equation}
whenever \(E\subset Q\) and \(|E|\geq \lambda |Q|\), and
\begin{equation}\label{eq:section5-weighted-upper}
    \frac{w(E)}{w(Q)}
    \leq
    C
    \left(\frac{|E|}{|Q|}\right)^\alpha
\end{equation}
whenever \(E\subset Q\), then
\begin{equation}\label{eq:section5-weighted-chain}
    BMO_{\mathbb X}^{*}\cap L^1_{\mathrm{loc}}(\mathbb R^n)
    \hookrightarrow
    BMO
    \hookrightarrow
    BMO_{\mathbb X}.
\end{equation}
\end{proposition}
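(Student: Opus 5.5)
The plan is to read off the two formulas for the testing functionals, then verify the two hypotheses of Proposition~\ref{prop:two-sided-testing-consequence}.

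\emph{The formulas.} Since \(\|\chi_Q\|_{X_Q}=1\), the identity \eqref{eq:section5-weighted-characteristic} substituted into Definitions~\ref{def:lower-testing-functional} and~\ref{def:upper-testing-functional} gives \eqref{eq:section5-weighted-Lambda} and \eqref{eq:section5-weighted-Psi} at once. Each local space is a weighted \(L^1\) space with normalized weight \(w\,dx/w(Q)\). It is therefore a Banach function space: the lattice property is clear and Fatou follows from monotone convergence. Assumption~\ref{ass:normalization} holds with \(c_0=C_0=1\), and Assumption~\ref{ass:rho-subadditivity} holds with \(\rho=1\) and \(C_\rho=1\), by Remark~\ref{rem:rho-subadditivity}.

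\emph{First embedding.} Hypothesis \eqref{eq:section5-weighted-lower} says exactly that \(\Lambda_{\mathbb X}(\lambda)\ge c>0\) for the given \(0<\lambda<1/2\). Theorem~\ref{thm:median-testing-embedding}, or equivalently Proposition~\ref{prop:weighted-example-median}, then yields
\[
BMO_{\mathbb X}^{*}\cap L^1_{\mathrm{loc}}\hookrightarrow BMO,
\]
with constant \(C_{n,\lambda}/c\).

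\emph{Second embedding.} Hypothesis \eqref{eq:section5-weighted-upper} bounds every admissible ratio in \eqref{eq:section5-weighted-Psi} by \(Ct^\alpha\). Hence \(\Psi_{\mathbb X}(t)\le \min(1,Ct^\alpha)\); the bound by \(1\) comes from monotonicity of \(w\) on subsets. Consequently
\[
\int_0^1 \Psi_{\mathbb X}(t)\,\frac{dt}{t}\le C\int_0^1 t^{\alpha-1}\,dt=C/\alpha<\infty .
\]
This is condition \eqref{eq:Psi-integral-condition} with \(\rho=1\), which is the same as \eqref{eq:weighted-Psi-integral}. Proposition~\ref{prop:Psi-implies-TX} then gives \(T_{\mathbb X}(\eta)<\infty\) for every \(\eta\), in particular for \(\eta=\eta_0\). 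Theorem~\ref{thm:sparse-testing-sufficiency}, or Proposition~\ref{prop:weighted-sparse-testing}, then gives \(BMO\hookrightarrow BMO_{\mathbb X}\).

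Chaining the two embeddings, or citing Proposition~\ref{prop:two-sided-testing-consequence} directly, gives \eqref{eq:section5-weighted-chain}.

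\emph{Expected obstacle.} Essentially nothing is delicate here; the argument is bookkeeping. The one point to watch is that the monotonicity of \(\Psi_{\mathbb X}\), used inside Proposition~\ref{prop:Psi-implies-TX}, holds. It does, because the admissible class of sets \(E\) grows with \(t\). The finiteness of \(w(Q)\) guarantees that every ratio is well defined.
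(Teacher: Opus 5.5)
Your proposal is correct and follows the paper's proof essentially step for step. The paper reads both formulas off $\|\chi_E\|_{X_Q}=w(E)/w(Q)$, uses the lower hypothesis to get $\Lambda_{\mathbb X}(\lambda)\ge c$ and applies Theorem~\ref{thm:median-testing-embedding}, and uses the upper hypothesis to get $\Psi_{\mathbb X}(t)\le Ct^\alpha$, whose integrability feeds Proposition~\ref{prop:Psi-implies-TX} and Theorem~\ref{thm:sparse-testing-sufficiency}; your extra checks (Banach function space structure, $c_0=C_0=1$, $\rho=C_\rho=1$, monotonicity of $\Psi_{\mathbb X}$) are bookkeeping the paper leaves implicit.
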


\begin{proof}
The formulas \eqref{eq:section5-weighted-Lambda} and
\eqref{eq:section5-weighted-Psi} follow directly from
\eqref{eq:section5-weighted-characteristic}. The lower estimate
\eqref{eq:section5-weighted-lower} gives
\(\Lambda_{\mathbb X}(\lambda)\geq c>0\). Therefore
Theorem~\ref{thm:median-testing-embedding} gives the first embedding in
\eqref{eq:section5-weighted-chain}. The upper estimate
\eqref{eq:section5-weighted-upper} gives
\[
    \Psi_{\mathbb X}(t)
    \leq
    Ct^\alpha.
\]
Hence
\[
    \int_0^1
    \frac{\Psi_{\mathbb X}(t)}{t}\,dt
    \leq
    C\int_0^1 t^{\alpha-1}\,dt
    <\infty.
\]
By Proposition~\ref{prop:Psi-implies-TX} and
Theorem~\ref{thm:sparse-testing-sufficiency}, we obtain
\(BMO\hookrightarrow BMO_{\mathbb X}\).
\end{proof}

\begin{corollary}\label{cor:weighted-Ainfty-sparse}
Let \(w\in A_\infty\), and let \(\mathbb X=\{X_Q\}_{Q\subset\mathbb R^n}\) be defined by \eqref{eq:section5-weighted-local-norm}. Then
\begin{equation}\label{eq:section5-Ainfty-BMO-to-BMOX}
    BMO\hookrightarrow BMO_{\mathbb X}.
\end{equation}
If, in addition, there exists \(0<\lambda<1/2\) such that
\begin{equation}\label{eq:section5-Ainfty-extra-lower}
    \Lambda_{\mathbb X}(\lambda)>0,
\end{equation}
then
\begin{equation}\label{eq:section5-Ainfty-equivalence}
    BMO
    =
    BMO_{\mathbb X}
    =
    BMO_{\mathbb X}^{*}\cap L^1_{\mathrm{loc}}(\mathbb R^n)
\end{equation}
with equivalence of seminorms.
\end{corollary}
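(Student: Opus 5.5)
The plan is to verify the upper small-set condition of Proposition~\ref{prop:weighted-testing-criteria} for \(w\in A_\infty\). Then we invoke Proposition~\ref{prop:Psi-implies-TX} with \(\rho=1\). This is legitimate because each \(X_Q\) in \eqref{eq:section5-weighted-local-norm} is a weighted \(L^1\) space and hence Banach (Remark~\ref{rem:rho-subadditivity}). Theorem~\ref{thm:sparse-testing-sufficiency} then gives the embedding \eqref{eq:section5-Ainfty-BMO-to-BMOX}.

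The key input is a standard characterization of \(A_\infty\) (Coifman--Fefferman, Hru\v{s}\v{c}ev). For \(w\in A_\infty\) there exist \(C>0\) and \(\delta>0\) such that
\[
    \frac{w(E)}{w(Q)}\leq C\left(\frac{|E|}{|Q|}\right)^{\delta}
\]
for every cube \(Q\) and every measurable \(E\subset Q\). I will quote this as the definition, or as an equivalent form, of \(A_\infty\) via \cite{CoifmanFefferman1974,Hruscev1984}. If one starts instead from the reverse Hölder inequality \(\langle w^{1+\varepsilon}\rangle_Q^{1/(1+\varepsilon)}\leq C\langle w\rangle_Q\), the estimate follows from Hölder's inequality:
\[
    w(E)\leq |E|^{\varepsilon/(1+\varepsilon)}\Bigl(\int_Q w^{1+\varepsilon}\Bigr)^{1/(1+\varepsilon)}\leq C|E|^{\varepsilon/(1+\varepsilon)}|Q|^{1/(1+\varepsilon)}\langle w\rangle_Q .
\]
This gives the bound with \(\delta=\varepsilon/(1+\varepsilon)\). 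By \eqref{eq:section5-weighted-Psi} we then have \(\Psi_{\mathbb X}(t)\leq Ct^{\delta}\). Hence \(\int_0^1\Psi_{\mathbb X}(t)\,dt/t\leq C/\delta<\infty\), exactly as in the proof of Proposition~\ref{prop:weighted-testing-criteria}. Proposition~\ref{prop:Psi-implies-TX} yields \(T_{\mathbb X}(\eta_0)<\infty\), and Theorem~\ref{thm:sparse-testing-sufficiency} gives \(BMO\hookrightarrow BMO_{\mathbb X}\).

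For the second assertion, note that the extra hypothesis \eqref{eq:section5-Ainfty-extra-lower} together with \(T_{\mathbb X}(\eta_0)<\infty\) are exactly the assumptions of Proposition~\ref{prop:two-sided-testing-consequence}. That proposition delivers \eqref{eq:section5-Ainfty-equivalence} with equivalence of seminorms.

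I also expect \(\Lambda_{\mathbb X}(\lambda)>0\) to be automatic for \(A_\infty\) weights. The complementary form of the \(A_\infty\) condition states that \(|E|\geq\lambda|Q|\) implies \(w(E)\geq c_\lambda w(Q)\). I would remark on this but not rely on it, since the statement assumes it.

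The only real obstacle is the first step: fixing which definition of \(A_\infty\) to use and citing the quantitative small-set estimate uniformly over all cubes. Once the power-type bound \(\Psi_{\mathbb X}(t)\lesssim t^\delta\) is in hand, the rest is a direct chaining of earlier results.
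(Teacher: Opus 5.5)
Your proposal is correct and follows the paper's proof essentially verbatim. The power-type \(A_\infty\) estimate gives \(\Psi_{\mathbb X}(t)\lesssim t^{\delta}\), and Proposition~\ref{prop:Psi-implies-TX} (with \(\rho=1\)) together with Theorem~\ref{thm:sparse-testing-sufficiency} gives \(BMO\hookrightarrow BMO_{\mathbb X}\); the equivalence then follows from Theorem~\ref{thm:median-testing-embedding} and the trivial bound \(\|f\|_{BMO_{\mathbb X}^{*}}\leq\|f\|_{BMO_{\mathbb X}}\), which is exactly what Proposition~\ref{prop:two-sided-testing-consequence} packages. Your side remark is also right, even though the paper's Remark~\ref{rem:Ainfty-density} suggests otherwise: since \(A_\infty=\bigcup_{p}A_p\) and \(w\in A_p\) gives \(w(E)/w(Q)\geq [w]_{A_p}^{-1}(|E|/|Q|)^{p}\), one has \(\Lambda_{\mathbb X}(\lambda)>0\) for every \(0<\lambda<1\), so the extra hypothesis is automatic, and your argument does not depend on it.
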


\begin{proof}
Since \(w\in A_\infty\), there exist constants \(C>0\) and \(\alpha>0\) such that
\begin{equation}\label{eq:section5-Ainfty-upper}
    \frac{w(E)}{w(Q)}
    \leq
    C\left(\frac{|E|}{|Q|}\right)^\alpha
\end{equation}
for every cube \(Q\) and every measurable set \(E\subset Q\). Hence
\[
    \Psi_{\mathbb X}(t)\leq Ct^\alpha,
\]
and therefore
\[
    \int_0^1
    \frac{\Psi_{\mathbb X}(t)}{t}\,dt<\infty.
\]
The embedding \eqref{eq:section5-Ainfty-BMO-to-BMOX} follows from
Proposition~\ref{prop:Psi-implies-TX} and
Theorem~\ref{thm:sparse-testing-sufficiency}. If
\eqref{eq:section5-Ainfty-extra-lower} also holds, then
Theorem~\ref{thm:median-testing-embedding} gives
\[
    BMO_{\mathbb X}^{*}\cap L^1_{\mathrm{loc}}(\mathbb R^n)
    \hookrightarrow BMO.
\]
Combining this with \eqref{eq:section5-Ainfty-BMO-to-BMOX} and
\eqref{eq:BMOXstar-controlled-by-BMOX-section5} proves
\eqref{eq:section5-Ainfty-equivalence}.
\end{proof}

\begin{remark}\label{rem:Ainfty-density}
Corollary~\ref{cor:weighted-Ainfty-sparse} uses the \(A_\infty\) condition only for
the small-set estimate needed to prove \(BMO\hookrightarrow BMO_{\mathbb X}\). The
lower median-testing condition
\eqref{eq:section5-Ainfty-extra-lower} is stated separately because it is not a
consequence of the usual upper \(A_\infty\) estimate at an arbitrary level
\(0<\lambda<1/2\).
\end{remark}

Let \(X\) be a rearrangement invariant Banach function space on \(\mathbb R^n\), and
suppose that
\[
    0<\|\chi_Q\|_X<\infty
\]
for every cube \(Q\). Let \(\varphi_X\) denote the fundamental function of \(X\), that is,
\begin{equation}\label{eq:section5-fundamental-function}
    \varphi_X(s):=\|\chi_E\|_X
    \qquad\text{whenever } |E|=s.
\end{equation}
Define the localized family by
\begin{equation}\label{eq:section5-ri-localization}
    \|g\|_{X_Q}
    :=
    \frac{\|g\chi_Q\|_X}{\|\chi_Q\|_X}.
\end{equation}
For \(0<t<1\), define
\begin{equation}\label{eq:section5-lower-fundamental-ratio}
    \gamma_X(t)
    :=
    \inf_{s>0}
    \frac{\varphi_X(ts)}{\varphi_X(s)}
\end{equation}
and
\begin{equation}\label{eq:section5-upper-fundamental-ratio}
    \Gamma_X(t)
    :=
    \sup_{s>0}
    \frac{\varphi_X(ts)}{\varphi_X(s)} .
\end{equation}

\begin{lemma}\label{lem:ri-testing-functions}
For the localized rearrangement invariant family in
\eqref{eq:section5-ri-localization}, one has
\begin{equation}\label{eq:section5-ri-Lambda}
    \Lambda_{\mathbb X}(t)=\gamma_X(t)
\end{equation}
and
\begin{equation}\label{eq:section5-ri-Psi}
    \Psi_{\mathbb X}(t)=\Gamma_X(t),
    \qquad 0<t<1.
\end{equation}
\end{lemma}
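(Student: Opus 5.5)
The plan is to reduce both testing functionals to ratios of the fundamental function and then use two elementary facts: \(\varphi_X\) is monotone, and cube volumes exhaust \((0,\infty)\).

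\emph{Reduction to \(\varphi_X\).} Since \(X\) is rearrangement invariant, \(\|\chi_E\|_X\) depends only on \(|E|\), so \(\varphi_X\) in \eqref{eq:section5-fundamental-function} is well defined. For a cube \(Q\) and a measurable \(E\subset Q\), the localization \eqref{eq:section5-ri-localization} gives
\[
    \frac{\|\chi_E\|_{X_Q}}{\|\chi_Q\|_{X_Q}}
    =\frac{\|\chi_E\|_X}{\|\chi_Q\|_X}
    =\frac{\varphi_X(|E|)}{\varphi_X(|Q|)} .
\]
Thus \(\Lambda_{\mathbb X}(t)\) and \(\Psi_{\mathbb X}(t)\) become extremal values of \(\varphi_X(|E|)/\varphi_X(|Q|)\), taken over the admissible sets \(E\).

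\emph{Monotonicity of \(\varphi_X\).} If \(s_1\le s_2\), choose nested sets \(E_1\subset E_2\) with \(|E_i|=s_i\). The lattice property \eqref{eq:lattice-property} then gives \(\varphi_X(s_1)\le\varphi_X(s_2)\).

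\emph{Optimizing over \(E\).} For fixed \(Q\), every admissible set in \eqref{eq:Lambda-definition} satisfies \(|E|\ge t|Q|\). Hence, by monotonicity,
\[
    \inf_{\substack{E\subset Q\\ |E|\geq t|Q|}}\frac{\varphi_X(|E|)}{\varphi_X(|Q|)}=\frac{\varphi_X(t|Q|)}{\varphi_X(|Q|)} .
\]
The infimum is attained: Lebesgue measure restricted to \(Q\) is nonatomic, so the continuity of \(r\mapsto|Q\cap B(x_0,r)|\) produces a set \(E\subset Q\) with \(|E|=t|Q|\) exactly. In the same way, for \(\Psi_{\mathbb X}\) the supremum over sets with \(|E|\le t|Q|\) equals \(\varphi_X(t|Q|)/\varphi_X(|Q|)\), attained by a set of measure exactly \(t|Q|\).

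\emph{Optimizing over \(Q\).} As \(Q\) ranges over all cubes, \(s=|Q|\) ranges over all of \((0,\infty)\), and the ratio depends on \(Q\) only through \(s\). Therefore
\[
    \Lambda_{\mathbb X}(t)=\inf_{s>0}\frac{\varphi_X(ts)}{\varphi_X(s)}=\gamma_X(t),
    \qquad
    \Psi_{\mathbb X}(t)=\sup_{s>0}\frac{\varphi_X(ts)}{\varphi_X(s)}=\Gamma_X(t).
\]

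\emph{Main obstacle.} The only delicate point is ensuring the extremum over \(E\) is exactly \(\varphi_X(t|Q|)/\varphi_X(|Q|)\), not merely a bound by it. This needs two ingredients. First, \(\varphi_X\) must be well defined, which uses rearrangement invariance, in particular on sets of equal measure that are not equimeasurable in a stronger sense. Second, a subset of \(Q\) of exact measure \(t|Q|\) must exist, which uses nonatomicity. Both are standard. Beyond that, the hypothesis \(0<\|\chi_Q\|_X<\infty\) gives \(0<\varphi_X(s)<\infty\) for every \(s>0\), which keeps all the ratios meaningful.
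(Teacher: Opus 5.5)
Your proposal is correct and follows the paper's proof. Both reduce the ratio to \(\varphi_X(|E|)/\varphi_X(|Q|)\) by rearrangement invariance, use monotonicity of \(\varphi_X\) to place the extremum at \(|E|=t|Q|\), and then let \(|Q|\) range over \((0,\infty)\); you also justify the monotonicity (via the lattice property) and the existence of a subset of measure exactly \(t|Q|\), both of which the paper leaves implicit.
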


\begin{proof}
Let \(E\subset Q\). By rearrangement invariance,
\[
    \frac{\|\chi_E\|_{X_Q}}{\|\chi_Q\|_{X_Q}}
    =
    \frac{\|\chi_E\|_X}{\|\chi_Q\|_X}
    =
    \frac{\varphi_X(|E|)}{\varphi_X(|Q|)}.
\]
Since \(\varphi_X\) is increasing, the infimum over all sets with
\(|E|\geq t|Q|\) is attained at the boundary \(|E|=t|Q|\). Thus
\[
    \Lambda_{\mathbb X}(t)
    =
    \inf_Q
    \frac{\varphi_X(t|Q|)}{\varphi_X(|Q|)}
    =
    \inf_{s>0}
    \frac{\varphi_X(ts)}{\varphi_X(s)}
    =
    \gamma_X(t).
\]
Similarly, the supremum over all sets with \(|E|\leq t|Q|\) is attained at the
boundary \(|E|=t|Q|\), and therefore
\[
    \Psi_{\mathbb X}(t)
    =
    \sup_{s>0}
    \frac{\varphi_X(ts)}{\varphi_X(s)}
    =
    \Gamma_X(t).
\]
\end{proof}

\begin{proposition}\label{prop:ri-testing-consequence}
Assume that \(X\) is rearrangement invariant and that the localized family is defined
by \eqref{eq:section5-ri-localization}. Suppose that for some \(0<\lambda<1/2\),
\begin{equation}\label{eq:section5-ri-lower-condition}
    \gamma_X(\lambda)>0,
\end{equation}
and that
\begin{equation}\label{eq:section5-ri-upper-condition}
    \int_0^1
    \frac{\Gamma_X(t)}{t}\,dt
    <\infty.
\end{equation}
Then
\begin{equation}\label{eq:section5-ri-chain}
    BMO_{\mathbb X}^{*}\cap L^1_{\mathrm{loc}}(\mathbb R^n)
    \hookrightarrow
    BMO
    \hookrightarrow
    BMO_{\mathbb X}.
\end{equation}
\end{proposition}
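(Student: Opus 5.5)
The plan is to reduce Proposition~\ref{prop:ri-testing-consequence} to the abstract two-sided result, Proposition~\ref{prop:two-sided-testing-consequence}, or more precisely to its two ingredients: Theorem~\ref{thm:median-testing-embedding} for the left embedding, and Proposition~\ref{prop:Psi-implies-TX} combined with Theorem~\ref{thm:sparse-testing-sufficiency} for the right one. The translation between the abstract testing functionals and the fundamental-function ratios is supplied by Lemma~\ref{lem:ri-testing-functions}.

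First, I check that the localized family \eqref{eq:section5-ri-localization} is admissible. It is the construction of Example~\ref{ex:localized-global-space}, so each \(X_Q\) is a Banach function space on \(Q\): the lattice, triangle and Fatou properties are inherited from \(X\) after division by the constant \(\|\chi_Q\|_X\). Moreover \(\|\chi_Q\|_{X_Q}=1\), so Assumption~\ref{ass:normalization} holds with \(c_0=C_0=1\). By Remark~\ref{rem:rho-subadditivity}, Assumption~\ref{ass:rho-subadditivity} holds with \(\rho=1\) and \(C_\rho=1\), since the triangle inequality iterates to finite sums.

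For the left embedding, Lemma~\ref{lem:ri-testing-functions} gives \(\Lambda_{\mathbb X}(\lambda)=\gamma_X(\lambda)>0\) by \eqref{eq:section5-ri-lower-condition}, with \(0<\lambda<1/2\). Theorem~\ref{thm:median-testing-embedding} then yields
\[
    BMO_{\mathbb X}^{*}\cap L^1_{\mathrm{loc}}(\mathbb R^n)\hookrightarrow BMO,
\]
with constant \(C_{n,\lambda}/\gamma_X(\lambda)\).

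For the right embedding, Lemma~\ref{lem:ri-testing-functions} gives \(\Psi_{\mathbb X}(t)=\Gamma_X(t)\). Hence \eqref{eq:section5-ri-upper-condition} is exactly \eqref{eq:Psi-integral-condition} with \(\rho=1\). Proposition~\ref{prop:Psi-implies-TX} then gives \(T_{\mathbb X}(\eta)<\infty\) for every \(\eta\), in particular for \(\eta_0\). Theorem~\ref{thm:sparse-testing-sufficiency} gives \(BMO\hookrightarrow BMO_{\mathbb X}\). Chaining the two embeddings proves \eqref{eq:section5-ri-chain}.

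The only step needing care is the use of Lemma~\ref{lem:ri-testing-functions}, which I take as given. If one wants to double-check it, the subtle point is the monotonicity of \(t\mapsto\Psi_{\mathbb X}(t)\), which Proposition~\ref{prop:Psi-implies-TX} uses. This is immediate from the definition \eqref{eq:Psi-definition}, since the admissible class of sets grows with \(t\). The identity \(\|\chi_E\|_X=\varphi_X(|E|)\) also requires that the measure space be nonatomic, which holds for Lebesgue measure. Beyond this the argument is pure bookkeeping, and I expect no genuine obstacle.
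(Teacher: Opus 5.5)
Your proposal is correct and is essentially the paper's proof: Lemma~\ref{lem:ri-testing-functions} converts the hypotheses into \(\Lambda_{\mathbb X}(\lambda)>0\) and into the \(\Psi_{\mathbb X}\)-integral condition with \(\rho=1\). Then Theorem~\ref{thm:median-testing-embedding} gives the left embedding, and Proposition~\ref{prop:Psi-implies-TX} with Theorem~\ref{thm:sparse-testing-sufficiency} gives the right one. Your explicit check of Assumption~\ref{ass:normalization} with \(c_0=C_0=1\) and of Assumption~\ref{ass:rho-subadditivity} with \(\rho=C_\rho=1\) fills in bookkeeping the paper leaves implicit; note that nonatomicity is needed only to guarantee sets \(E\subset Q\) with \(|E|=t|Q|\), not for \(\|\chi_E\|_X=\varphi_X(|E|)\), which follows from rearrangement invariance alone.
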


\begin{proof}
By Lemma~\ref{lem:ri-testing-functions}, condition
\eqref{eq:section5-ri-lower-condition} is precisely
\(\Lambda_{\mathbb X}(\lambda)>0\). Hence
Theorem~\ref{thm:median-testing-embedding} gives the first embedding in
\eqref{eq:section5-ri-chain}. Also, by Lemma~\ref{lem:ri-testing-functions},
condition \eqref{eq:section5-ri-upper-condition} is the integral condition from
Proposition~\ref{prop:Psi-implies-TX} with \(\rho=1\). Therefore
Theorem~\ref{thm:sparse-testing-sufficiency} gives the second embedding.
\end{proof}

\begin{remark}\label{rem:ri-not-overclaimed}
Proposition~\ref{prop:ri-testing-consequence} is formulated as a sufficient criterion.
Without additional regularity assumptions on the fundamental function \(\varphi_X\),
one should not replace \eqref{eq:section5-ri-lower-condition} and
\eqref{eq:section5-ri-upper-condition} by a universal equivalence statement.
\end{remark}

Let \(\Phi\) be a Young function, and let \(\Phi^{-1}\) denote its generalized inverse.
On a cube \(Q\), define the normalized Luxemburg norm
\begin{equation}\label{eq:section5-orlicz-local-norm}
    \|g\|_{\Phi,Q}
    :=
    \inf\left\{
        a>0:
        \frac{1}{|Q|}
        \int_Q
        \Phi\left(\frac{|g(x)|}{a}\right)\,dx
        \leq 1
    \right\}.
\end{equation}
Set \(X_Q=L^\Phi(Q)\) with the norm in
\eqref{eq:section5-orlicz-local-norm}. Then
\[
    \|\chi_Q\|_{X_Q}=\frac{1}{\Phi^{-1}(1)}.
\]
Multiplying all local norms by the harmless factor \(\Phi^{-1}(1)\), if desired, gives
an equivalent normalized family with \(\|\chi_Q\|_{X_Q}=1\).

\begin{lemma}\label{lem:orlicz-characteristic}
For the Orlicz family in \eqref{eq:section5-orlicz-local-norm}, if
\(E\subset Q\) and \(|E|=t|Q|\), then
\begin{equation}\label{eq:section5-orlicz-characteristic}
    \frac{\|\chi_E\|_{X_Q}}{\|\chi_Q\|_{X_Q}}
    =
    \frac{\Phi^{-1}(1)}{\Phi^{-1}(1/t)}.
\end{equation}
Consequently,
\begin{equation}\label{eq:section5-orlicz-Lambda-Psi}
    \Lambda_{\mathbb X}(t)
    =
    \Psi_{\mathbb X}(t)
    =
    \frac{\Phi^{-1}(1)}{\Phi^{-1}(1/t)},
    \qquad 0<t<1.
\end{equation}
\end{lemma}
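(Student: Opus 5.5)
The plan is to compute \(\|\chi_E\|_{\Phi,Q}\) directly from the Luxemburg definition \eqref{eq:section5-orlicz-local-norm}, and then read off \(\Lambda_{\mathbb X}\) and \(\Psi_{\mathbb X}\) using monotonicity in \(|E|\).

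\textbf{Step 1 (the characteristic-function norm).} Fix \(E\subset Q\) with \(|E|=t|Q|\), \(0<t\le 1\). For \(a>0\),
\[
    \frac{1}{|Q|}\int_Q \Phi\Bigl(\frac{\chi_E(x)}{a}\Bigr)\,dx
    =
    t\,\Phi(1/a),
\]
since \(\Phi(0)=0\). Thus the defining condition reads \(\Phi(1/a)\le 1/t\). With the generalized inverse \(\Phi^{-1}(s):=\sup\{u\ge0:\Phi(u)\le s\}\), this condition is equivalent to \(1/a\le \Phi^{-1}(1/t)\). We use that \(\Phi\) is nondecreasing and left-continuous (lower semicontinuous) on its finiteness interval, so the supremum defining \(\Phi^{-1}\) is attained. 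It follows that \(\|\chi_E\|_{\Phi,Q}=1/\Phi^{-1}(1/t)\). Taking \(t=1\) recovers \(\|\chi_Q\|_{X_Q}=1/\Phi^{-1}(1)\), and dividing the two gives \eqref{eq:section5-orlicz-characteristic}. The same formula is unchanged after the normalization by the factor \(\Phi^{-1}(1)\), since that factor cancels in the ratio.

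\textbf{Step 2 (the testing functionals).} The ratio depends on \(E\) only through \(s=|E|/|Q|\), and it equals \(\Phi^{-1}(1)/\Phi^{-1}(1/s)\). Because \(\Phi^{-1}\) is nondecreasing, this expression is nondecreasing in \(s\).

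For \(\Lambda_{\mathbb X}(t)\) we minimize over \(s\ge t\), so the infimum is attained at \(s=t\). Such sets exist in every cube: take a rectangular slab, or use the nonatomicity of Lebesgue measure.

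For \(\Psi_{\mathbb X}(t)\) we maximize over \(s\le t\), so the supremum is again attained at \(s=t\).

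Neither value depends on \(Q\), so the outer infimum and supremum over cubes are trivial. This gives \eqref{eq:section5-orlicz-Lambda-Psi}.

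\textbf{Main obstacle.} The only delicate point is Step 1 for degenerate Young functions, where \(\Phi\) may jump to \(+\infty\) or be flat on an interval. There I have to justify that the infimum over \(a\) equals exactly \(1/\Phi^{-1}(1/t)\) rather than merely being bounded by it.

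I would first handle the finite, continuous, strictly increasing case, where \(\Phi(1/a)\le 1/t\iff a\ge 1/\Phi^{-1}(1/t)\) is immediate. For the general case, I would note that the admissible set of \(a\) is an up-ray, \(\{a>0:1/a\le u^*\}\) with \(u^*=\Phi^{-1}(1/t)\). Whether its endpoint is included does not affect the infimum, so the formula holds regardless of those issues. The condition \(0<\Phi^{-1}(1/t)<\infty\) follows from \(\Phi\) being a genuine Young function, with \(\Phi(u)\to\infty\) and \(\Phi\) not identically \(0\) or \(\infty\) near \(0\).
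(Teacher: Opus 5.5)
Your proposal is correct and follows the paper's proof. Both compute \(\|\chi_E\|_{X_Q}=1/\Phi^{-1}(1/t)\) directly from the Luxemburg condition \(t\Phi(1/a)\le 1\), take \(t=1\) to get \(\|\chi_Q\|_{X_Q}\), and divide. Your Step 2 is slightly more complete than the paper, which only notes that the ratio depends on \(|E|/|Q|\). The monotonicity of \(\Phi^{-1}\) that you invoke is exactly what is needed to pass from sets with \(|E|=t|Q|\) to the infimum over \(|E|\ge t|Q|\) and the supremum over \(|E|\le t|Q|\), and your remark on degenerate \(\Phi\) covers a point the paper leaves implicit.
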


\begin{proof}
For \(a>0\),
\[
    \frac{1}{|Q|}
    \int_Q
    \Phi\left(\frac{\chi_E(x)}{a}\right)\,dx
    =
    \frac{|E|}{|Q|}
    \Phi\left(\frac1a\right)
    =
    t\Phi\left(\frac1a\right).
\]
Thus the Luxemburg condition is equivalent to
\[
    t\Phi\left(\frac1a\right)\leq 1,
\]
or
\[
    a\geq \frac{1}{\Phi^{-1}(1/t)}.
\]
Therefore
\[
    \|\chi_E\|_{X_Q}
    =
    \frac{1}{\Phi^{-1}(1/t)},
    \qquad
    \|\chi_Q\|_{X_Q}
    =
    \frac{1}{\Phi^{-1}(1)}.
\]
This proves \eqref{eq:section5-orlicz-characteristic}. Since the expression depends
only on \(t=|E|/|Q|\), formula \eqref{eq:section5-orlicz-Lambda-Psi} follows.
\end{proof}

\begin{proposition}\label{prop:orlicz-testing-consequence}
Let \(\mathbb X\) be the Orlicz family associated with \(\Phi\). Then, for every
\(0<\lambda<1/2\),
\[
    \Lambda_{\mathbb X}(\lambda)>0.
\]
Hence
\begin{equation}\label{eq:section5-orlicz-star-to-bmo}
    BMO_{\mathbb X}^{*}\cap L^1_{\mathrm{loc}}(\mathbb R^n)
    \hookrightarrow BMO .
\end{equation}
If, in addition,
\begin{equation}\label{eq:section5-orlicz-integral-condition}
    \int_0^1
    \frac{1}{\Phi^{-1}(1/t)}
    \frac{dt}{t}
    <\infty,
\end{equation}
then
\begin{equation}\label{eq:section5-orlicz-bmo-to-bmox}
    BMO\hookrightarrow BMO_{\mathbb X}.
\end{equation}
\end{proposition}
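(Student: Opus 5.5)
The plan is to reduce both assertions to the earlier abstract criteria via the explicit formula in Lemma~\ref{lem:orlicz-characteristic}. As a preliminary step, I will pass to the equivalent normalized family obtained by multiplying each Luxemburg norm by \(\Phi^{-1}(1)\). This changes neither the ratios in \eqref{eq:Lambda-definition} and \eqref{eq:Psi-definition} nor the spaces \(BMO_{\mathbb X}\), \(BMO_{\mathbb X}^{*}\); the seminorms change only by a fixed constant. Assumption~\ref{ass:normalization} then holds with \(c_0=C_0=1\). Since Luxemburg norms are genuine norms, Assumption~\ref{ass:rho-subadditivity} holds with \(\rho=1\) (Remark~\ref{rem:rho-subadditivity}). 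The lattice and Fatou properties of \(L^\Phi(Q)\) are standard.

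For the first assertion, fix \(0<\lambda<1/2\). By \eqref{eq:section5-orlicz-Lambda-Psi},
\[
    \Lambda_{\mathbb X}(\lambda)=\frac{\Phi^{-1}(1)}{\Phi^{-1}(1/\lambda)}.
\]
Strictly speaking, the lemma gives the ratio for \(|E|=t|Q|\). For \(|E|\ge\lambda|Q|\), I will use monotonicity: \(\|\chi_E\|_{X_Q}=1/\Phi^{-1}(|Q|/|E|)\) and \(\Phi^{-1}\) is nondecreasing, so the infimum is attained at \(|E|=\lambda|Q|\). It therefore suffices to check that this quotient is strictly positive. The only point needing care is that the generalized inverse takes finite positive values at the finite points \(1\) and \(1/\lambda\).

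\begin{itemize}
\item Positivity of \(\Phi^{-1}(1)\) holds as long as \(\Phi\) is not identically \(+\infty\) on \((0,\infty)\), which is part of the definition of a Young function.
\item Finiteness of \(\Phi^{-1}(1/\lambda)\) holds because \(\Phi\) is unbounded; if \(\Phi\) jumps to \(+\infty\), as in the \(L^\infty\) case, the generalized inverse is still finite, namely at most the jump point.
\end{itemize}

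This is the step I expect to require the most attention, since degenerate Young functions must be handled through the convention for \(\Phi^{-1}\). With positivity in hand, Theorem~\ref{thm:median-testing-embedding} yields \eqref{eq:section5-orlicz-star-to-bmo}.

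For the second assertion, again by \eqref{eq:section5-orlicz-Lambda-Psi},
\[
    \Psi_{\mathbb X}(t)=\frac{\Phi^{-1}(1)}{\Phi^{-1}(1/t)}.
\]
Hence
\[
    \int_0^1\frac{\Psi_{\mathbb X}(t)}{t}\,dt
    =\Phi^{-1}(1)\int_0^1\frac{1}{\Phi^{-1}(1/t)}\frac{dt}{t},
\]
which is finite by \eqref{eq:section5-orlicz-integral-condition}. Proposition~\ref{prop:Psi-implies-TX} with \(\rho=1\) then gives \(T_{\mathbb X}(\eta_0)<\infty\), and Theorem~\ref{thm:sparse-testing-sufficiency} yields \eqref{eq:section5-orlicz-bmo-to-bmox}. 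Finally, I will transfer both conclusions back to the original unnormalized Luxemburg family; this costs only a factor \(\Phi^{-1}(1)\) in the constants.
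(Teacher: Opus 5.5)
Your proposal is correct and follows essentially the same route as the paper: read off \(\Lambda_{\mathbb X}(\lambda)=\Psi_{\mathbb X}(\lambda)=\Phi^{-1}(1)/\Phi^{-1}(1/\lambda)\) from Lemma~\ref{lem:orlicz-characteristic} and invoke Theorem~\ref{thm:median-testing-embedding} for the first embedding. For the second, absorb the constant \(\Phi^{-1}(1)\) and apply Proposition~\ref{prop:Psi-implies-TX} with \(\rho=1\) together with Theorem~\ref{thm:sparse-testing-sufficiency}. Your extra checks (monotonicity in \(|E|\), positivity and finiteness of the generalized inverse, \(\rho=1\) for Luxemburg norms) are details the paper leaves implicit. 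The renormalization step is harmless but unnecessary, since Assumption~\ref{ass:normalization} already holds with \(c_0=C_0=1/\Phi^{-1}(1)\).
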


\begin{proof}
By Lemma~\ref{lem:orlicz-characteristic},
\[
    \Lambda_{\mathbb X}(\lambda)
    =
    \frac{\Phi^{-1}(1)}{\Phi^{-1}(1/\lambda)}
    >0.
\]
The embedding \eqref{eq:section5-orlicz-star-to-bmo} follows from
Theorem~\ref{thm:median-testing-embedding}. Moreover,
\eqref{eq:section5-orlicz-integral-condition} implies the sufficient small-set
condition in Proposition~\ref{prop:Psi-implies-TX}, since the multiplicative constant
\(\Phi^{-1}(1)\) is harmless. Hence
Theorem~\ref{thm:sparse-testing-sufficiency} gives
\eqref{eq:section5-orlicz-bmo-to-bmox}.
\end{proof}

\begin{example}\label{ex:orlicz-power}
Let \(\Phi(t)=t^p\), where \(1\leq p<\infty\). Then
\[
    \Phi^{-1}(s)=s^{1/p}.
\]
Thus Lemma~\ref{lem:orlicz-characteristic} gives
\[
    \Lambda_{\mathbb X}(t)=\Psi_{\mathbb X}(t)=t^{1/p}.
\]
This recovers Proposition~\ref{prop:Lp-full-equivalence}.
\end{example}

\begin{example}\label{ex:orlicz-exponential-warning}
Let \(\Phi(t)=e^t-1\). Then
\[
    \Phi^{-1}(s)\simeq \log(e+s),
\]
and hence
\[
    \Psi_{\mathbb X}(t)
    \simeq
    \frac{1}{\log(e/t)}.
\]
The integral
\[
    \int_0^1
    \frac{\Psi_{\mathbb X}(t)}{t}\,dt
\]
diverges. Therefore Proposition~\ref{prop:Psi-implies-TX} does not apply to this
endpoint Orlicz model. This does not disprove the embedding
\(BMO\hookrightarrow BMO_{\mathbb X}\); it only shows that the sufficient integral
criterion from Proposition~\ref{prop:Psi-implies-TX} is not designed to capture this
borderline case.
\end{example}

Let \(p(\cdot):\mathbb R^n\to[1,\infty)\) be measurable, and assume
\begin{equation}\label{eq:section5-variable-exponent-bounds}
    1\leq p_-:=
    \operatorname*{ess\,inf}_{x\in\mathbb R^n}p(x)
    \leq
    p_+:=
    \operatorname*{ess\,sup}_{x\in\mathbb R^n}p(x)
    <\infty .
\end{equation}
For a cube \(Q\), define the normalized local Luxemburg norm
\begin{equation}\label{eq:section5-variable-local-norm}
    \|g\|_{X_Q}
    :=
    \inf\left\{
        a>0:
        \frac{1}{|Q|}
        \int_Q
        \left(\frac{|g(x)|}{a}\right)^{p(x)}\,dx
        \leq 1
    \right\}.
\end{equation}
Then \(\|\chi_Q\|_{X_Q}=1\).

\begin{lemma}\label{lem:variable-characteristic-bounds}
For every cube \(Q\), every measurable set \(E\subset Q\), and the variable exponent
family in \eqref{eq:section5-variable-local-norm}, one has
\begin{equation}\label{eq:section5-variable-characteristic-bounds}
    \left(\frac{|E|}{|Q|}\right)^{1/p_-}
    \leq
    \|\chi_E\|_{X_Q}
    \leq
    \left(\frac{|E|}{|Q|}\right)^{1/p_+}.
\end{equation}
Consequently,
\begin{equation}\label{eq:section5-variable-Lambda-bound}
    \Lambda_{\mathbb X}(t)\geq t^{1/p_-},
    \qquad 0<t<1,
\end{equation}
and
\begin{equation}\label{eq:section5-variable-Psi-bound}
    \Psi_{\mathbb X}(t)\leq t^{1/p_+},
    \qquad 0<t<1.
\end{equation}
\end{lemma}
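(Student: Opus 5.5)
The plan is to compute directly with the modular in \eqref{eq:section5-variable-local-norm}. The only input is the elementary fact that for \(0<a\leq 1\) the map \(p\mapsto a^{-p}\) is nondecreasing. Hence, for almost every \(x\),
\[
    a^{-p_-}\leq a^{-p(x)}\leq a^{-p_+}.
\]
Fix \(Q\) and \(E\subset Q\), and set \(s:=|E|/|Q|\in[0,1]\). If \(s=0\), then \(\chi_E=0\) a.e., both sides of \eqref{eq:section5-variable-characteristic-bounds} vanish, and there is nothing to prove. So we assume \(0<s\leq1\). For \(a>0\) the modular is
\[
    \rho_Q(a):=\frac{1}{|Q|}\int_E a^{-p(x)}\,dx .
\]
This is nonincreasing in \(a\), and \(\|\chi_E\|_{X_Q}=\inf\{a>0:\rho_Q(a)\leq1\}\).

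For the upper bound I will test the single value \(a=s^{1/p_+}\), which lies in \((0,1]\). Since \(p(x)\leq p_+\) a.e., we get \(a^{-p(x)}\leq a^{-p_+}=1/s\) a.e. on \(E\), and therefore \(\rho_Q(a)\leq s\cdot(1/s)=1\). So \(a\) is admissible, which gives \(\|\chi_E\|_{X_Q}\leq s^{1/p_+}\).

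For the lower bound, take any \(0<a<s^{1/p_-}\), which again lies in \((0,1)\). Since \(p(x)\geq p_-\) a.e., we have \(a^{-p(x)}\geq a^{-p_-}>1/s\) on \(E\), so \(\rho_Q(a)>1\) because \(|E|>0\). Thus no \(a<s^{1/p_-}\) is admissible, which yields \(\|\chi_E\|_{X_Q}\geq s^{1/p_-}\).

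The same argument with \(E=Q\) (so \(s=1\)) confirms the stated normalization \(\|\chi_Q\|_{X_Q}=1\). Consequently the ratios in \eqref{eq:Lambda-definition} and \eqref{eq:Psi-definition} reduce to \(\|\chi_E\|_{X_Q}\) itself.

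The two consequences then follow from monotonicity in \(s\). If \(|E|\geq t|Q|\), then \(s\geq t\), so \(\|\chi_E\|_{X_Q}\geq s^{1/p_-}\geq t^{1/p_-}\). Taking infima over \(E\) and \(Q\) gives \eqref{eq:section5-variable-Lambda-bound}. If \(|E|\leq t|Q|\), then \(\|\chi_E\|_{X_Q}\leq s^{1/p_+}\leq t^{1/p_+}\), and taking suprema gives \eqref{eq:section5-variable-Psi-bound}.

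There is no real obstacle here. The only points needing care are restricting to \(a\leq1\), so that the comparison of \(a^{-p(x)}\) with \(a^{-p_\pm}\) goes in the correct direction, and observing that the exponent bounds only need to hold almost everywhere because they enter under the integral.
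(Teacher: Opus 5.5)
Your proposal is correct and follows essentially the same argument as the paper: you test \(a=s^{1/p_+}\) for the upper bound and rule out every \(a<s^{1/p_-}\) for the lower bound, using \(0<a\le 1\) to compare \(a^{-p(x)}\) with \(a^{-p_\pm}\). Your extra remarks make explicit two points the paper leaves implicit: the case \(s=1\) confirms \(\|\chi_Q\|_{X_Q}=1\), so the ratios defining \(\Lambda_{\mathbb X}\) and \(\Psi_{\mathbb X}\) reduce to \(\|\chi_E\|_{X_Q}\), and the exponent bounds are only needed almost everywhere.
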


\begin{proof}
Let \(s=|E|/|Q|\). If \(s=0\), then the claim is trivial. Assume
\(0<s\leq1\). For \(a>0\),
\[
    \frac{1}{|Q|}
    \int_Q
    \left(\frac{\chi_E(x)}{a}\right)^{p(x)}\,dx
    =
    \frac{1}{|Q|}
    \int_E
    a^{-p(x)}\,dx .
\]
Since \(0<s\leq1\), the numbers \(s^{1/p_-}\) and \(s^{1/p_+}\) belong to
\((0,1]\). Taking \(a=s^{1/p_+}\), and using \(p(x)\leq p_+\), we get
\[
    \frac{1}{|Q|}
    \int_E a^{-p(x)}\,dx
    \leq
    s a^{-p_+}
    =
    1.
\]
Thus \(\|\chi_E\|_{X_Q}\leq s^{1/p_+}\).

On the other hand, if \(a<s^{1/p_-}\), then, since \(p(x)\geq p_-\) and \(0<a<1\),
\[
    a^{-p(x)}
    \geq
    a^{-p_-}
    >
    s^{-1}.
\]
Therefore
\[
    \frac{1}{|Q|}
    \int_E a^{-p(x)}\,dx
    >
    1.
\]
Hence \(\|\chi_E\|_{X_Q}\geq s^{1/p_-}\). This proves
\eqref{eq:section5-variable-characteristic-bounds}. The estimates
\eqref{eq:section5-variable-Lambda-bound} and
\eqref{eq:section5-variable-Psi-bound} follow directly from the definitions of
\(\Lambda_{\mathbb X}\) and \(\Psi_{\mathbb X}\).
\end{proof}

\begin{proposition}\label{prop:variable-exponent-consequence}
Let \(p(\cdot)\) satisfy \eqref{eq:section5-variable-exponent-bounds}, and let
\(\mathbb X\) be defined by \eqref{eq:section5-variable-local-norm}. Then
\begin{equation}\label{eq:section5-variable-chain}
    BMO_{\mathbb X}^{*}\cap L^1_{\mathrm{loc}}(\mathbb R^n)
    \hookrightarrow
    BMO
    \hookrightarrow
    BMO_{\mathbb X}.
\end{equation}
\end{proposition}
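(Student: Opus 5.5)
The plan is to verify the two testing conditions using Lemma~\ref{lem:variable-characteristic-bounds} and then invoke the general criteria already established. For the first embedding, fix any \(0<\lambda<1/2\). By \eqref{eq:section5-variable-Lambda-bound},
\[
    \Lambda_{\mathbb X}(\lambda)\geq \lambda^{1/p_-}>0 .
\]
The normalization Assumption~\ref{ass:normalization} holds with \(c_0=C_0=1\), since \(\|\chi_Q\|_{X_Q}=1\). Each \(X_Q\) is a Banach function space, because the Luxemburg functional is a lattice norm with the Fatou property by monotone convergence. Theorem~\ref{thm:median-testing-embedding} therefore gives
\[
    BMO_{\mathbb X}^{*}\cap L^1_{\mathrm{loc}}(\mathbb R^n)\hookrightarrow BMO,
\]
with constant at most \(C_{n,\lambda}\lambda^{-1/p_-}\).

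For the second embedding, use Proposition~\ref{prop:Psi-implies-TX} with \(\rho=1\). This choice is permitted by Remark~\ref{rem:rho-subadditivity}, since every \(X_Q\) is normed and the constant \(C_\rho=1\) is uniform in \(Q\). By \eqref{eq:section5-variable-Psi-bound} we have \(\Psi_{\mathbb X}(t)\leq t^{1/p_+}\), so
\[
    \int_0^1\frac{\Psi_{\mathbb X}(t)}{t}\,dt\leq\int_0^1 t^{1/p_+-1}\,dt=p_+<\infty .
\]
Here the hypothesis \(p_+<\infty\) is essential. Proposition~\ref{prop:Psi-implies-TX} then yields \(T_{\mathbb X}(\eta)<\infty\) for every \(\eta\), in particular for \(\eta=\eta_0\). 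Theorem~\ref{thm:sparse-testing-sufficiency} then gives \(BMO\hookrightarrow BMO_{\mathbb X}\).

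Chaining the two embeddings proves \eqref{eq:section5-variable-chain}. The only point needing care is to confirm that the variable Luxemburg functional is a genuine norm with the Fatou property, uniformly in \(Q\), so that the abstract hypotheses are met. Convexity of \(a\mapsto a^{p(x)}\) for \(p(x)\geq1\) gives the triangle inequality in the standard way, and the Fatou property follows from monotone convergence of the modular. Everything else is a direct substitution into earlier results.
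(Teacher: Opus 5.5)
Your proposal is correct and follows the paper's proof: you use $\Lambda_{\mathbb X}(\lambda)\geq\lambda^{1/p_-}>0$ with Theorem~\ref{thm:median-testing-embedding} for the first embedding. For the second you use $\Psi_{\mathbb X}(t)\leq t^{1/p_+}$ together with Proposition~\ref{prop:Psi-implies-TX} (with $\rho=1$) and Theorem~\ref{thm:sparse-testing-sufficiency}. Your checks that each local Luxemburg space is a Banach function space with $\|\chi_Q\|_{X_Q}=1$ only make explicit what the paper uses implicitly when it takes $\rho=1$ and $c_0=C_0=1$.
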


\begin{proof}
By \eqref{eq:section5-variable-Lambda-bound},
\(\Lambda_{\mathbb X}(\lambda)>0\) for every \(0<\lambda<1\). Hence
Theorem~\ref{thm:median-testing-embedding} gives the first embedding in
\eqref{eq:section5-variable-chain}. By \eqref{eq:section5-variable-Psi-bound},
\[
    \int_0^1
    \frac{\Psi_{\mathbb X}(t)}{t}\,dt
    \leq
    \int_0^1 t^{1/p_+-1}\,dt
    <\infty.
\]
Therefore Proposition~\ref{prop:Psi-implies-TX} and
Theorem~\ref{thm:sparse-testing-sufficiency} give the second embedding.
\end{proof}

\begin{remark}\label{rem:variable-exponent-scope}
The proof of Proposition~\ref{prop:variable-exponent-consequence} uses only the
uniform bounds in \eqref{eq:section5-variable-exponent-bounds}. More refined
hypotheses, such as log-H\"older continuity, are not needed for this local testing
consequence, although they may be relevant for other operator-theoretic questions in
variable exponent spaces.
\end{remark}

We close with a flexible testing template. Let \(0<\theta\leq1\), and let
\(L:(1,\infty)\to(0,\infty)\) be nondecreasing. Suppose that the local family
\(\mathbb X\) satisfies the characteristic-function estimate
\begin{equation}\label{eq:section5-mixed-characteristic}
    C_1^{-1}
    t^\theta L(1/t)^{-1}
    \leq
    \frac{\|\chi_E\|_{X_Q}}{\|\chi_Q\|_{X_Q}}
    \leq
    C_1
    t^\theta L(1/t)^{-1}
\end{equation}
whenever \(E\subset Q\) and \(|E|=t|Q|\), where \(0<t<1\) and \(C_1\geq1\) is
independent of \(E\) and \(Q\).

\begin{proposition}\label{prop:mixed-model}
Assume that \(\mathbb X\) satisfies
\eqref{eq:section5-mixed-characteristic}. If
\begin{equation}\label{eq:section5-mixed-integral}
    \int_0^1
    t^{\theta-1}L(1/t)^{-1}\,dt
    <\infty,
\end{equation}
then
\begin{equation}\label{eq:section5-mixed-chain}
    BMO_{\mathbb X}^{*}\cap L^1_{\mathrm{loc}}(\mathbb R^n)
    \hookrightarrow
    BMO
    \hookrightarrow
    BMO_{\mathbb X}.
\end{equation}
\end{proposition}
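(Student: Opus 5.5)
The plan is to split \eqref{eq:section5-mixed-chain} into its two embeddings. The first will come from Theorem~\ref{thm:median-testing-embedding} through a lower bound on \(\Lambda_{\mathbb X}\). The second will come from Proposition~\ref{prop:Psi-implies-TX} and Theorem~\ref{thm:sparse-testing-sufficiency} through an upper bound on \(\Psi_{\mathbb X}\). Both bounds will be read off from \eqref{eq:section5-mixed-characteristic}. As in the other Banach models of this section, I take \(\rho=1\); if the family is only quasi-Banach, the same argument needs \eqref{eq:section5-mixed-integral} with \(\rho\)-th powers.

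\textbf{Lower testing.} Fix any \(0<\lambda<1/2\). Let \(E\subset Q\) with \(|E|=s|Q|\) and \(s\ge\lambda\). If \(s<1\), the lower half of \eqref{eq:section5-mixed-characteristic} gives
\[
\frac{\|\chi_E\|_{X_Q}}{\|\chi_Q\|_{X_Q}}\ \ge\ C_1^{-1}s^\theta L(1/s)^{-1}.
\]
Since \(1/s\le 1/\lambda\) and \(L\) is nondecreasing, we have \(L(1/s)\le L(1/\lambda)\). Together with \(s^\theta\ge\lambda^\theta\), this yields the bound \(C_1^{-1}\lambda^\theta L(1/\lambda)^{-1}\). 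If \(s=1\), then \(\chi_E=\chi_Q\) a.e., and the ratio is \(1\). In both cases
\[
\Lambda_{\mathbb X}(\lambda)\ \ge\ \min\{1,\;C_1^{-1}\lambda^\theta L(1/\lambda)^{-1}\}\ >\ 0,
\]
and Theorem~\ref{thm:median-testing-embedding} gives the first embedding.

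\textbf{Upper testing.} Let \(E\subset Q\) with \(|E|=s|Q|\) and \(s\le t<1\). The upper half of \eqref{eq:section5-mixed-characteristic} bounds the ratio by \(C_1 s^\theta L(1/s)^{-1}\), and this is the main obstacle. The function \(s\mapsto s^\theta L(1/s)^{-1}\) is nondecreasing, because \(s^\theta\) increases and \(L(1/s)^{-1}\) increases as \(s\) increases. Hence the supremum over \(s\le t\) is attained at \(s=t\). The case \(s=0\) is trivial, since then \(\chi_E=0\) a.e. This gives
\[
\Psi_{\mathbb X}(t)\ \le\ C_1\,t^\theta L(1/t)^{-1},\qquad 0<t<1.
\]
Therefore
\[
\int_0^1\frac{\Psi_{\mathbb X}(t)}{t}\,dt\ \le\ C_1\int_0^1 t^{\theta-1}L(1/t)^{-1}\,dt\ <\ \infty
\]
by \eqref{eq:section5-mixed-integral}. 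Proposition~\ref{prop:Psi-implies-TX} then yields \(T_{\mathbb X}(\eta_0)<\infty\), and Theorem~\ref{thm:sparse-testing-sufficiency} gives \(BMO\hookrightarrow BMO_{\mathbb X}\). Combining the two embeddings proves \eqref{eq:section5-mixed-chain}.
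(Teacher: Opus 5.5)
Your proposal is correct and follows the paper's proof. The lower half of \eqref{eq:section5-mixed-characteristic} gives \(\Lambda_{\mathbb X}(\lambda)\ge C_1^{-1}\lambda^\theta L(1/\lambda)^{-1}>0\), which feeds Theorem~\ref{thm:median-testing-embedding}. The upper half gives \(\Psi_{\mathbb X}(t)\le C_1 t^\theta L(1/t)^{-1}\), which feeds Proposition~\ref{prop:Psi-implies-TX} with \(\rho=1\) and then Theorem~\ref{thm:sparse-testing-sufficiency}. The monotonicity of \(s\mapsto s^\theta L(1/s)^{-1}\), the endpoint cases \(s=0,1\), and the \(\rho=1\) caveat that you spell out are exactly what the paper's two-line argument uses without stating.
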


\begin{proof}
Fix \(0<\lambda<1/2\). By the lower bound in
\eqref{eq:section5-mixed-characteristic},
\[
    \Lambda_{\mathbb X}(\lambda)
    \geq
    C_1^{-1}\lambda^\theta L(1/\lambda)^{-1}
    >0.
\]
Therefore Theorem~\ref{thm:median-testing-embedding} gives
\[
    BMO_{\mathbb X}^{*}\cap L^1_{\mathrm{loc}}(\mathbb R^n)
    \hookrightarrow BMO.
\]
The upper bound in \eqref{eq:section5-mixed-characteristic} gives
\[
    \Psi_{\mathbb X}(t)
    \leq
    C_1 t^\theta L(1/t)^{-1}.
\]
The integrability assumption \eqref{eq:section5-mixed-integral} therefore implies
\[
    \int_0^1
    \frac{\Psi_{\mathbb X}(t)}{t}\,dt
    <\infty.
\]
By Proposition~\ref{prop:Psi-implies-TX} and
Theorem~\ref{thm:sparse-testing-sufficiency}, this yields
\(BMO\hookrightarrow BMO_{\mathbb X}\).
\end{proof}

\begin{remark}\label{rem:borderline-models}
The mixed model in Proposition~\ref{prop:mixed-model} should be understood as a
testing template. To use it for a concrete Orlicz--Lorentz or Lorentz--Zygmund space,
one must verify the characteristic-function estimate
\eqref{eq:section5-mixed-characteristic}. Once that estimate is available, the
embeddings follow directly from the abstract criteria proved in
Section~\ref{sec:median-testing} and Section~\ref{sec:sparse-testing}.
\end{remark}

The examples above show that the two testing quantities play complementary roles. The
lower testing functional \(\Lambda_{\mathbb X}\) prevents degeneration on large subsets
and yields
\[
    BMO_{\mathbb X}^{*}\cap L^1_{\mathrm{loc}}(\mathbb R^n)
    \hookrightarrow BMO.
\]
The sparse testing seminorm \(T_{\mathbb X}\), or the sufficient small-set condition
involving \(\Psi_{\mathbb X}\), controls sparse sums and yields
\[
    BMO\hookrightarrow BMO_{\mathbb X}.
\]
For normalized \(L^p\), weighted models satisfying both
\eqref{eq:section5-weighted-lower} and \eqref{eq:section5-weighted-upper}, and bounded
variable exponent models, both mechanisms are available and the corresponding
generalized \(BMO\) scales are equivalent to classical \(BMO\). For endpoint Orlicz-type
models, the lower testing condition may still hold while the sufficient integral
criterion for sparse testing can fail, as shown in
Example~\ref{ex:orlicz-exponential-warning}. This distinction is one of the reasons for
separating the median-testing and sparse-testing mechanisms throughout the paper.

\section{Conclusion}\label{sec:conclusion}

In this paper we studied generalized \(BMO\)-type spaces associated with a normalized 
family of local quasi-Banach function spaces 
\(\mathbb X=\{X_Q\}_{Q\subset\mathbb R^n}\). The main point was to separate two 
different mechanisms governing the comparison with classical \(BMO\). The first one is 
a lower median-testing condition, expressed through the functional 
\(\Lambda_{\mathbb X}\), which gives a sufficient criterion for the embedding
\[
    BMO_{\mathbb X}^{*}\cap L^1_{\mathrm{loc}}(\mathbb R^n)
    \hookrightarrow BMO.
\]
The second one is a sparse testing condition, expressed through 
\(T_{\mathbb X}\), which gives a sufficient criterion for the embedding
\[
    BMO\hookrightarrow BMO_{\mathbb X}.
\]
These two criteria clarify that lower thickness on large subsets and sparse 
summability on small exceptional sets play complementary roles.

The examples considered in Section~\ref{sec:examples} show that the abstract testing 
conditions recover the expected behavior for normalized \(L^p\) spaces, weighted 
\(L^1\) spaces, rearrangement invariant spaces, Orlicz spaces, variable exponent 
spaces, and mixed Orlicz--Lorentz type models. In particular, the framework explains why classical \(L^p\) models lead back to the usual 
\(BMO\) scale, while \(A_\infty\)-weighted models yield the sparse-side embedding 
\(BMO\hookrightarrow BMO_{\mathbb X}\). Full equivalence in the weighted setting requires, 
in addition, the lower median-testing condition.

Several questions remain open. One natural direction is to determine additional 
structural assumptions under which the median-testing condition becomes necessary, 
not merely sufficient, for the embedding 
\(BMO_{\mathbb X}^{*}\hookrightarrow BMO\). Another direction is to sharpen the sparse 
testing criterion in borderline Orlicz and rearrangement invariant settings, where 
the sufficient integral condition involving \(\Psi_{\mathbb X}\) may fail. It would 
also be interesting to extend the present framework to spaces of homogeneous type, 
anisotropic geometries, and operator-adapted oscillation spaces arising in endpoint 
estimates for singular integrals and commutators.

\end{document}